\newtheorem{theorem}{Theorem}[section]
\newtheorem{lemma}[theorem]{Lemma}
\newtheorem{proposition}[theorem]{Proposition}
\newtheorem{corollary}[theorem]{Corollary}
\theoremstyle{definition}
\newtheorem{example}[theorem]{Example}
\theoremstyle{remark}
\newtheorem{remark}[theorem]{Remark}
\numberwithin{equation}{section}
\newcommand{\wbar}[1]{\overline{#1}}
\newcommand{\what}[1]{\widehat{#1}}
\newcommand{\wtil}[1]{\widetilde{#1}}
\newcommand{\til}[1]{\tilde{#1}}
\newcommand{\id}{\operatorname{id}}
\newcommand{\supp}{\operatorname{supp}}
\newcommand{\sgn}{\operatorname{sgn}}
\newcommand{\fC}{\mathcal{C}}
\newcommand{\fK}{\mathcal{K}}
\newcommand{\fM}{\mathcal{M}}
\newcommand{\Cee}{\mathbb{C}}
\newcommand{\Ree}{\mathbb{R}}
\newcommand{\Tee}{\mathbb{T}}
\newcommand{\Zee}{\mathbb{Z}}
\newcommand{\En}{\mathbb{N}}
\newcommand{\alp}{\alpha}
\newcommand{\del}{\delta}
\newcommand{\Del}{\Delta}
\newcommand{\eps}{\varepsilon}
\newcommand{\lam}{\lambda}
\newcommand{\sig}{\sigma}
\newcommand{\vphi}{\varphi}
\newcommand{\meas}{\mathrm{M}}
\newcommand{\blone}{\mathrm{L}^1}
\newcommand{\trig}{\mathrm{Trig}}
\newcommand{\norm}[1]{\left\Vert#1\right\Vert}
\begin{document}
\setcounter{page}{1}

\title[Commuting contractive idempotents]{Commuting contractive idempotents in measure algebras}

\author[N. Spronk]{Nico Spronk$^1$$^{*}$}

\address{$^{1}$ Department of Pure Mathematics, University of Waterloo,
Waterloo, ON, N2L 3G1, Canada}
\email{\textcolor[rgb]{0.00,0.00,0.84}{nspronk@uwaterloo.ca}}

\dedicatory{In honour of Tony's contributions to, and leadership in,
the international abstract harmonic analysis community,
the Canadian mathematical community,
and my career.}

\subjclass[2010]{Primary 43A05; Secondary 43A77, 43A40.}

\keywords{Measure algebra, idempotent, groups of measures.}

\date{Received: xxxxxx; Revised: yyyyyy; Accepted: zzzzzz.
\newline \indent $^{*}$ Corresponding author}

\begin{abstract}
We determine when contractive idempotents in the measure algebra of a locally compact
group commute.  We consider a dynamical version of the same result.  We also look at
some properties of groups of measures whose identity is a contactive idempotent.
\end{abstract} \maketitle

Let $G$ be a locally compact group.  When $G$ is abelian, Cohen \cite{cohen}
characterised all of the idempotents in the measure algebra $\meas(G)$.  For non-abelian
$G$, the idempotent probabilities were characterized by Kawada and It\^{o} \cite{kawadaito},
while the contractive idempotents were characterized by Greenleaf \cite{greenleaf}.  We
give an exact statement of their results in Theorem \ref{theo:greenleaf}, below.
For certain compact groups, the central idempotent measures were characterized by Rider \cite{rider},
in a manner which is pleasingly reminiscent of Cohen's result on abelian groups.  Rider
points out a counterexample to his result when some assumptions are dropped.
This has motivated our Example \ref{ex:rider} (i), below.

Discussion of contactive idempotents has been conducted in the setting of locally compact
quantum groups by Neufang, Salmi, Skalski and the present author \cite{neufangsss}.

Under certain assumptions, results of Stromberg \cite{stromberg} and
Muhkerjea \cite{mukherjea}, show that convolution powers of a probability measure converge
either to an idempotent, or to $0$.  See Theorem \ref{theo:stromberg}
and Remark \ref{rem:mukherjea}, below.  We study limits of convolution powers
of products of contractive idempotents whose supports generate a compact subgroup.

We close with a study of certain groups of measures identified by Greenleaf \cite{greenleaf} and 
Stokke \cite{stokke} whose identities are contractive idempotents.  

\subsection{Notation and background}
We shall always let $G$ denote a locally
compact group with measure algebra $\meas(G)$.  We let
$\fK(G)$ denote the collection of all compact subgroups
of $G$.  For $K$ in $\fK(G)$ we let $m_K$ denote the normalised Haar measure on
$K$ as an element of $\meas(G)$.  
We shall identify the group algebra $\blone(K)$ as a subalgebra
of $\meas(G)$ via the identification $f\mapsto fm_K$, i.e.\ for $u\in\fC_0(G)$
we define
\[
\int_Gu\,d(fm_K)=\int_Ku(k)f(k)dk
\]
where $dk=dm_K(k)$. We let for $K$ in $\fK(G)$, $\what{K}_1$ denote the space
of multiplicative characters on $K$.  Hence $\what{K}_1$ is the dual group
of $K/[K,K]$, where $[K,K]$ is the closed commutator subgroup.  If $K$ is ableian
we will write $\what{K}$ for $\what{K}_1$.

Let us recall what is known about contractive idempotents.

\begin{theorem}\label{theo:greenleaf}
{\bf (i)} {\rm (Kawada and It\^{o} \cite{kawadaito})} If $\mu$ in $\meas(G)$ is a probability
with $\mu\ast\mu=\mu$, then there is $K$ in $\fK(G)$ with $\mu=m_K$.

{\bf (ii)} {\rm (Greenleaf \cite{greenleaf})}  If $\mu$ in $\meas(G)$ is a non-zero and contractive, 
$\norm{\mu}\leq 1$, and $\mu\ast\mu=\mu$, 
then there is $K$ in $\fK(G)$ and $\rho$ in $\what{K}_1$ for which $\mu=\rho m_K$.
\end{theorem}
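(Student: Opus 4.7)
Part (ii) will reduce to (i) via polar decomposition, so I would begin with the Kawada--It\^{o} statement.

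For (i): the plan is to show $K:=\supp\mu$ is a compact subgroup of $G$ and then identify $\mu$ with $m_K$. Since $\mu$ is nonnegative, $\supp(\mu\ast\mu)=\overline{KK}$, and the idempotency forces $KK\subseteq K$, so $K$ is a closed subsemigroup of $G$. Compactness of $K$ is the first substantial step: using tightness of the probability $\mu$ together with $\mu^{\ast n}=\mu$ for all $n$, one rules out escape of mass at infinity and concludes that $K$ is relatively compact, hence compact. Closure under inversion then follows from the classical principle that a compact subsemigroup of a topological group is a subgroup (for $x\in K$, the sequence $\{x^n\}$ accumulates at an idempotent which must be $e$, and a standard extraction produces $x^{-1}\in K$). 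Once $K$ is a compact subgroup, I would invoke Peter--Weyl: for each irreducible unitary representation $\pi$ of $K$, the operator $\pi(\mu)$ is idempotent of norm at most $1$ on $H_\pi$, hence an orthogonal projection; a vector $v$ in its range satisfies $\pi(x)v=v$ for $\mu$-almost every $x$, which by $\supp\mu=K$ and continuity forces $v$ to be $K$-fixed, so Schur's lemma gives $\pi(\mu)=0$ for every nontrivial irreducible $\pi$. Peter--Weyl inversion then yields $\mu=m_K$.

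For (ii): the starting observation is $|\mu\ast\mu|\le|\mu|\ast|\mu|$; combined with $\mu\ast\mu=\mu$ and $\norm{\mu}\le 1$, this gives $\norm{|\mu|}=\norm{\mu}\le 1$ together with $\norm{|\mu|}\le\norm{|\mu|\ast|\mu|}=\norm{|\mu|}^2$, forcing $\norm{|\mu|}=1$, so $|\mu|$ is a probability. The excess $|\mu|\ast|\mu|-|\mu|$ is then a nonnegative measure of total mass zero, hence vanishes, so $|\mu|$ is an idempotent probability. By part (i), $|\mu|=m_K$ for some $K\in\fK(G)$, and polar decomposition writes $\mu=\rho m_K$ with $\rho:K\to\Tee$ measurable. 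Unwinding $\mu\ast\mu=\mu$ as an integral identity against $\fC_0(G)$ test functions supported in $K$, and invoking $|\rho|=1$ together with a Cauchy--Schwarz bound in $L^2(K,m_K)$, forces $\rho(xy)=\rho(x)\rho(y)$ for $(m_K\times m_K)$-a.e.\ $(x,y)$. A standard almost-everywhere homomorphism cleanup then modifies $\rho$ on a null set to produce a continuous character, i.e.\ an element of $\what{K}_1$.

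The main obstacle I anticipate is the compactness of $\supp\mu$ in (i): this hides essentially all of the content of Kawada--It\^{o} and genuinely exploits that $\mu$ is a probability rather than merely a contractive idempotent. In (ii), the extraction of a genuine continuous character from an almost-everywhere homomorphism is routine but fiddly, and depends on picking the right representative of $\rho$.
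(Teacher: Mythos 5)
The paper does not actually prove Theorem \ref{theo:greenleaf}; it is quoted from Kawada--It\^{o} and Greenleaf, so there is no in-paper argument to compare against. Judged on its own terms, your outline of part (ii) is sound and is essentially how Greenleaf's theorem is deduced from Kawada--It\^{o}'s: $|\mu|=|\mu\ast\mu|\le|\mu|\ast|\mu|$ together with $\norm{\mu}\le 1$ forces $|\mu|$ to be an idempotent probability, hence $m_K$ by (i); then the equality case in $|\rho\ast\rho|\le|\rho|\ast|\rho|$ gives an a.e.\ multiplicative $\rho$ of modulus one, and the regularization $\tilde\rho(x)=\int_K\rho(xk)\overline{\rho(k)}\,dk$ produces the continuous character. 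The subsemigroup observation, the "compact subsemigroup of a group is a group" principle, and the Peter--Weyl endgame in (i) are also all correct as stated.

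The genuine gap is exactly where you suspected it: the compactness of $K=\supp\mu$ in (i). The mechanism you propose there is empty. Since $\mu^{\ast n}=\mu$ for all $n$, the sequence of convolution powers is constant, so there is no "escape of mass at infinity" to rule out; tightness of the single measure $\mu$ says nothing about its support being compact (it only gives compact sets of measure $1-\eps$), and the identity $\mu^{\ast n}(C)=\mu(C)$ carries no new information. A genuine argument is required, and it must come \emph{before} the "compact subsemigroup is a subgroup" step, which otherwise has nothing to stand on. One workable route: the map $f\mapsto\mu\ast f$ on $\mathrm{L}^2(G)$ is a contractive idempotent, hence an orthogonal projection, hence self-adjoint; since $\nu\mapsto(f\mapsto\nu\ast f)$ is injective on $\meas(G)$ this gives $\mu^*=\mu$, so $K$ is already a closed \emph{subgroup}. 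The range of this nonzero projection contains a nonzero $f\ge0$, and then $h=f\ast\tilde f\in\fC_0(G)$ satisfies $\mu\ast h=h$ and $h(e)>0$; evaluating at $e$ and using $\sup h=h(e)$ forces $h\equiv h(e)>0$ on $\supp\mu$, which is incompatible with $h$ vanishing at infinity unless $K$ is compact. (Alternatively, run the semigroup-theoretic argument of Kawada--It\^{o}, as in Heyer's book.) With that step supplied, the rest of your outline goes through.
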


Observe that all measures above are self-adjoint:
\[
\int_G u\, d(\rho m_K)^*=\int_K u(k^{-1})\wbar{\rho(k)}\,dk=\int_K u(k)\rho(k)\,dk
=\int_G u\,d(\rho m_K)
\]
thanks to unimodularity of the compact group $K$.

\section{Main Result}

In order to proceed, let us consider some conditions under which products of groups
are groups.

\begin{lemma}\label{lem:subgrprel}
Let $K_1,K_2\in\fK(G)$.  Then the folowing are equivalent

{\bf (i)} $K_1K_2=\{k_1k_2:k_1\in K_1,k_2\in K_2\}\in\fK(G)$, 

{\bf (ii)} $K_1K_2$ is closed under inversion, and

{\bf (ii)} $K_1K_2=K_2K_1$.
\end{lemma}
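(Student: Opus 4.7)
The plan is to establish the three implications (i)$\Rightarrow$(ii), (ii)$\Rightarrow$(iii), (iii)$\Rightarrow$(i) in a cycle. Observe first that $K_1K_2$ is automatically compact, because it is the image of the compact space $K_1\times K_2$ under the continuous multiplication map $G\times G\to G$; hence in $G$ (Hausdorff) it is compact and closed. So the only real content is whether $K_1K_2$ is a \emph{subgroup} of $G$.

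The two easy implications are essentially formal. For (i)$\Rightarrow$(ii), every subgroup is closed under inversion. For (ii)$\Rightarrow$(iii), I would just compute
\[
K_1K_2=(K_1K_2)^{-1}=K_2^{-1}K_1^{-1}=K_2K_1,
\]
using that each $K_i$ is itself closed under inversion.

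The only implication requiring any work is (iii)$\Rightarrow$(i). Assuming $K_1K_2=K_2K_1$, I would verify the subgroup axioms directly. The identity lies in $K_1K_2$ since $e=e\cdot e$. Closure under multiplication follows from
\[
(K_1K_2)(K_1K_2)=K_1(K_2K_1)K_2=K_1(K_1K_2)K_2=K_1K_2,
\]
where the final equality uses $K_1K_1=K_1$ and $K_2K_2=K_2$. Closure under inversion is
\[
(K_1K_2)^{-1}=K_2^{-1}K_1^{-1}=K_2K_1=K_1K_2.
\]
Combined with the compactness observation above, this gives $K_1K_2\in\fK(G)$.

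There is no serious obstacle; the only thing to be careful about is the topological point that $K_1K_2$ is automatically closed, which must be noted once to conclude compactness in (iii)$\Rightarrow$(i). (This is why compactness of $K_1$ and $K_2$ is essential: without it, a product of closed subgroups need not be closed.)
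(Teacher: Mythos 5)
Your proof is correct and follows essentially the same route as the paper: both note that $K_1K_2$ is automatically compact, get (ii)$\Leftrightarrow$(iii) from $(K_1K_2)^{-1}=K_2K_1$, and use commutation of the two subgroups to verify closure under multiplication and inversion. Nothing further is needed.
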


\begin{proof} Note first that $K_1K_2$ is always a compact subset of $G$
which contains the identity $e$.  If (i) holds, then (ii) holds.  
We have that $(K_1K_2)^{-1}=K_2K_1$, which immediately
shows the equivalence of (ii) and (iii).    Finally if (iii) holds
then it is clear that $K_1K_2$ is closed under multiplication.
Thus, since (iii) implies (ii), we see that $K_1K_2$ is closed
under multiplication and inversion, hence we obtain (i).  \end{proof}

We observe that $K_1K_2\in\fK(G)$ in the following situations:

{\bf (i)} $K_1\subset K_2$, and

{\bf (ii)} $K_1\subset N_G(K_2)=\{s\in G:sK_2s^{-1}=K_2\}$.

\noindent 
If $K_1\cap K_2=\{e\}$ and $K_1K_2\in\fK(G)$, then
$(K_1,K_2)$ is referred to as a {\em matched pair} \cite{takeuchi},
and $K_1K_2$ is a {\em Zappa-Sz\'{e}p product} \cite{zappa,szep}.
Indeed, we note that
the representation $k_1k_2$ of an element of $K_1K_2$ is unique
for if $k_1k_2=k_1'k_2'$, then $(k_1')^{-1}k_1=k_2'k_2^{-1}=e$.
Since, in general, we will not assume that $K_1\cap K_2=\{e\}$,
nor even that this intersection is normal in $K_1K_2$, when the latter is a group,
our situation appears to generalize that of a matched pair.

Is there a ``nice" characterization of when $K_1K_2\in\fK(G)$?

To proceed we shall use a non-normal form of the Weyl integration formula.  
If $H$ is a locally compact group and $L\in\fK(H)$,
then any continuous multiplicative function $\del:L\to \Ree^{>0}$
is trivial.  Thus the modular function $\Del$ of $H$ satisfies $\Del|_L=1$,
which is the modular function of $L$.  Hence the left homogeneous space
$H/L$ admits a left $H$-invariant Haar measue $m_{H/L}$.  We have
for $u$ in $\fC_c(H)$ that
\begin{equation}\label{eq:weyl}
\int_H u(h)\,dh =\int_{H/L}\int_L u(hl)\, dl\, d(hL)
\end{equation}
where $d(hL)=dm_{H/L}(hL)$.

\begin{theorem}\label{theo:contcommute}
Let $K_1,K_2\in\fK(G)$, $\rho_1\in\what{(K_1)}_1$ and
$\rho_2\in\what{(K_2)}_1$.  Then $\rho_1m_{K_1}$ and
$\rho_2m_{K_2}$ commute if and only if one of the following
cases holds for $K=K_1\cap K_2$:

{\bf (i)} $\rho_1|_K\not=\rho_2|_K$, in which case
$(\rho_1m_{K_1})\ast(\rho_2m_{K_2})=0$; or

{\bf (ii)} $\rho_1|_K=\rho_2|_K$, $K_1K_2\in\fK(G)$ and
the function 
\[
\rho:K_1K_2\to\Cee\text{ given by }\rho(k_1k_2)=\rho_1(k_1)\rho_2(k_2)\text{ for }
k_1\text{ in }K_1\text{ and }k_2\text{ in } K_2 
\]
defines a character; in which case $(\rho_1m_{K_1})\ast(\rho_2m_{K_2})=\rho m_{K_1K_2}$.

In particular, the idempotent probabilities $m_{K_1}$ and $m_{K_2}$ commute
if and only if $K_1K_2\in\fK(G)$, and
we have $m_{K_1}\ast m_{K_2}=m_{K_1K_2}$, in this case.  
\end{theorem}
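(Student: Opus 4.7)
The plan is to compute $(\rho_1 m_{K_1})\ast(\rho_2 m_{K_2})$ directly and let the dichotomy between cases (i) and (ii) drop out of Schur orthogonality on $K=K_1\cap K_2$; Theorem \ref{theo:greenleaf}(ii) then promotes any nonzero contractive idempotent produced to the form $\rho m_{K_1K_2}$ in one stroke, giving both $K_1K_2\in\fK(G)$ and the character property of $\rho$ simultaneously.

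For $u\in\fC_c(G)$, the convolution equals
\[
\int_{K_1}\int_{K_2}u(k_1k_2)\rho_1(k_1)\rho_2(k_2)\,dk_2\,dk_1.
\]
Applying the Weyl formula \eqref{eq:weyl} to $K_1$ with $L=K$ and then substituting $k_2\mapsto l^{-1}k_2$ inside the $K_2$-integral (valid since $l\in K\subset K_2$), combined with the identity $\rho_1(k_1l)\rho_2(l^{-1}k_2)=\rho_1(k_1)\rho_2(k_2)\rho_1(l)\overline{\rho_2(l)}$, isolates the scalar factor $\int_K\rho_1(l)\overline{\rho_2(l)}\,dl$. By orthogonality of characters on $K$ this factor is $1$ when $\rho_1|_K=\rho_2|_K$ and $0$ otherwise. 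Case (i) therefore yields $(\rho_1 m_{K_1})\ast(\rho_2 m_{K_2})=0$, and the symmetric calculation gives $(\rho_2 m_{K_2})\ast(\rho_1 m_{K_1})=0$, so the measures commute. Otherwise the calculation leaves
\[
(\rho_1 m_{K_1})\ast(\rho_2 m_{K_2})(u)=\int_{K_1/K}\int_{K_2}u(k_1k_2)\rho_1(k_1)\rho_2(k_2)\,dk_2\,d(k_1K),
\]
a nonzero measure of norm at most $1$ supported in $K_1K_2$; nonvanishing at any $h_0=k_1^0k_2^0$ follows by testing against $u=\overline{\rho_1(k_1^0)\rho_2(k_2^0)}\,v$ with $v\in\fC_c(G)$ positive and concentrated near $h_0$.

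Assume now $\rho_1|_K=\rho_2|_K$ and the two measures commute. The convolution is then a nonzero contractive idempotent, so by Theorem \ref{theo:greenleaf}(ii) it equals $\chi m_H$ for some $H\in\fK(G)$ and $\chi\in\what{H}_1$. Matching supports forces $H=K_1K_2\in\fK(G)$, and matching against the explicit formula---whose integrand is a well-defined function of $h=k_1k_2$ precisely because $\rho_1|_K=\rho_2|_K$---forces $\chi(k_1k_2)=\rho_1(k_1)\rho_2(k_2)$, so $\rho$ is a character. Conversely, under the hypotheses of case (ii), applying the Weyl formula to $H=K_1K_2$ with $L=K_2$ (identifying the probability space $H/K_2$ with $K_1/K$ via $k_1K_2\leftrightarrow k_1K$) shows $(\rho_1 m_{K_1})\ast(\rho_2 m_{K_2})=\rho m_H$; the symmetric computation together with $K_2K_1=K_1K_2$ from Lemma \ref{lem:subgrprel} gives the same value for $(\rho_2 m_{K_2})\ast(\rho_1 m_{K_1})$, establishing commutativity. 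The final statement on $m_{K_1}$ and $m_{K_2}$ is the specialization $\rho_1=\rho_2=1$, where $\rho_1|_K=\rho_2|_K$ is automatic and the extension $\rho$ is trivial on $K_1K_2$.

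The main obstacle I anticipate is sidestepping a direct verification that $\rho(k_1k_2)=\rho_1(k_1)\rho_2(k_2)$ is multiplicative on $K_1K_2$, which in the absence of a matched-pair structure would require controlling how a product $k_2k_1$ reshuffles back into an element of $K_1K_2$. Routing the argument through Greenleaf's classification extracts the character property for free from the idempotent property of the commuted convolution.
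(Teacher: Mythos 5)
Your proof is correct, and for most of its length it coincides with the paper's: the Weyl-formula computation over $K_1/K$, the Schur-orthogonality dichotomy on $K=K_1\cap K_2$, the support computation $\supp\nu=K_1K_2$, and the converse direction via the identification $K_1K_2/K_2\cong K_1/K$ are all the same steps. The one genuine divergence is how you obtain, in the forward direction of case (ii), that $K_1K_2\in\fK(G)$ and that $\rho$ is multiplicative: you note that the commuting product $\nu$ is a nonzero contractive idempotent and invoke Theorem \ref{theo:greenleaf}(ii) to write $\nu=\chi m_H$, then match supports and densities. The paper explicitly flags this appeal as available but deliberately avoids it, instead deducing $K_1K_2=K_2K_1\in\fK(G)$ from $\nu=\nu^*$ together with Lemma \ref{lem:subgrprel}, computing $\nu=\rho m_{K_1K_2}$ directly, and verifying multiplicativity of $\rho$ by comparing it with the analogously defined $\til{\rho}$ on $K_2K_1$. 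Your route is shorter but imports Greenleaf's classification, a substantially deeper input, where the paper stays self-contained modulo the Weyl formula; there is no circularity, since Theorem \ref{theo:greenleaf}(ii) is quoted from the literature and does not depend on the present theorem. Two small points you should make explicit: the ``matching'' step needs the observation that taking total variations in $\rho\mu_0=\chi m_H$ (where $\mu_0$ is the pushforward of $m_{K_1/K}\times m_{K_2}$ under multiplication) gives $\mu_0=m_{K_1K_2}$, whence $\chi=\rho$ almost everywhere and then everywhere by continuity of $\rho$ on the topological quotient $K_1K_2$ of $K_1\times K_2$; and your nonvanishing test function argument implicitly uses that the phase $\rho_1(k_1)\rho_2(k_2)$ is constant on the whole fibre $\{(k_1^0l,l^{-1}k_2^0):l\in K\}$ over $h_0$ (which is exactly where $\rho_1|_K=\rho_2|_K$ enters), so that shrinking the support of $v$ controls the phase on the entire preimage of that support.
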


\begin{proof}
We let $\nu=(\rho_1m_{K_1})\ast(\rho_2m_{K_2})$.  Notice that 
\begin{equation}\label{eq:nustar}
\rho_1m_{K_1}\text{ and }\rho_2m_{K_2}\text{ commute if and only if }\nu^*=\nu.
\end{equation}
For $u$ in $\fC_0(G)$ we have
\begin{align}
\int_G u\,d\nu
&=\int_{K_1}\int_{K_2}u(k_1k_2)\rho_1(k_1)\rho(k_2) \,dk_1\,dk_2  \notag \\
&=\int_{K_1/K}\int_K\int_{K_2}u(k_1kk_2)\rho_1(k_1k)\rho(k_2)\,dk_2
\,dk\,d(k_1K)  \notag \\
&=\int_{K_1/K}\int_{K_2}u(k_1k_2)\int_K\rho_1(k_1k)\rho_2(k^{-1}k_2)\,dk
\,dk_2\, d(k_1K)  \label{eq:computation1} \\
&=\int_{K_1/K}\int_{K_2}\left[\int_K\rho_1(k)\wbar{\rho_2(k)}\,dk\right]
u(k_1k_2)\rho_1(k_1)\rho_2(k_2)\,dk_2\, d(k_1K). \notag
\end{align}
The orthogonality of characters entails that the
quantity $\int_K\rho_1(k)\wbar{\rho_2(k)}\,dk$ is either $1$ or $0$, depending on
whether $\rho_1|_K=\rho_2|_K$ or not.  In the latter case, we see that $\nu=0$, and
hence $(\rho_2m_{K_2})\ast(\rho_1m_{K_1})=\nu^*=0=\nu$, and we see that condition (i) holds.

Hence for the remainder of the proof,
let us suppose that $\rho_1|_K=\rho_2|_K$.  Then the function $\rho:K_1K_2\to\Tee$
given as in (ii) is well-defined.
Indeed, if $k_1k_2=k_1'k_2'$, then $(k_1')^{-1}k_1=k_2'k_2^{-1}\in K$,
and our assumption allows us to apply $\rho_1$ to the left, and $\rho_2$ 
to the right, to gain the same result.    Furthermore, 
$(k_1,k_2)\mapsto \rho_1(k_1)\rho_2(k_2)=\rho(k_1k_2):K_1\times K_2\to\Tee$
is continuous and hence factors continuously through the topological quotient space $K_1K_2$
of $K_1\times K_2$.  

We now wish to show that $\supp\nu=K_1K_2$.  The inclusion $\supp\nu\subseteq K_1K_2$
is standard.  Conversely, if  $k_1^o$ in $K_1$, $k_2^o$ in $K_2$ and
$\eps>0$ are given and let $u,v\in\fC_0(G)$ satisfy 
\[
u\geq 0\text{ and }u(k_1^ok_2^o)>\eps>0\text{; and }v|_{K_1K_2}=\bar{\rho}.
\]
Then we may find open $U_1$ containing $k_1^o$ and open $U_2$ containing $k_2^o$
so that $U_1\times U_2\subseteq\{(k_1,k_2)\in K_1\times K_2:u(k_1k_2)>\eps\}$, and
our assumptions entail that
\begin{align*}
\int_G uv\,d\nu&=\int_{K_1}\int_{K_2}u(k_1k_2)\,dk_1\,dk_2 \\
&\geq\int_{U_1}\int_{U_2}u(k_1k_2)\,dk_1\,dk_2\geq
m_{K_1}(U_1)m_{K_2}(U_2)\eps>0.
\end{align*}
Hence $K_1K_2\subseteq\supp\nu$.  Notice that if it were the case that $\nu=0$, this would
contradict our present calculation of $\supp\nu$, and hence the assumption that 
$\rho_1|_K=\rho_2|_K$.  Thus $\nu=0$ only when $\rho_1|_K\not=\rho_2|_K$, showing
that (i) fully characterizes this situation.  We observe that 
\begin{equation}\label{eq:nustar2}
K_1K_2=\supp\nu^*=(\supp\nu)^{-1}=K_2K_1.
\end{equation}

Let us now assume  that $\rho_1m_{K_1}$ and $\rho_2m_{K_2}$ commute. Then, by
(\ref{eq:nustar}), $\nu=\nu^*$ and hence by (\ref{eq:nustar2}) and
Lemma \ref{lem:subgrprel}, we have that $K_1K_2\in\fK(G)$.
To complete the calculation we observe the following isomorphism of left $K_1$-spaces,
generalizing the second isomorphism theorem of groups:
\begin{equation}\label{eq:konespace}
K_1K_2/K_2\cong K_1/K,\; kK_2\mapsto kK.
\end{equation}
Hence for $u\in\fC(K_1K_2)$ which is constant of left cosets of $K_2$ we have 
\linebreak 
$\int_{K_1K_2/K_2}u(k)\, d(kK_2)=\int_{K_1/K} u(k_1)\,
d(k_1K)$, for the unique choices of left-invariant probability measures
on the homogeneous spaces.  We thus find that
\begin{align}
\int_G u\,d\nu&=\int_{K_1/K}\int_{K_2}u(k_1k_2)\rho(k_1k_2)\,dk_2\, d(k_1K) \notag \\
&=\int_{K_1K_2/K_2}\int_{K_2}u(k_1k_2)\rho(k_1k_2)\,dk_2\, d(kK_2) \label{eq:computation2} \\
&=\int_{K_1K_2} u(k)\rho(k)\, dk=\int_G u\,d(\rho m_{K_1K_2}) \notag
\end{align}
so $\nu=\rho m_{K_1K_2}$.
Since $\nu\ast\nu=\nu$, as $\rho_1m_{K_1}$ and $\rho_2m_{K_2}$ commute,
and $m_{K_1K_2}$ is the normalized
Haar measure of a compact subgroup, it follows that $(\rho m_{K_1K_2})
\ast(\rho m_{K_1K_2})=(\rho\ast\rho) m_{K_1K_2}$, whence $\rho=\rho\ast\rho$.
We could appeal immediately to Theorem \ref{theo:greenleaf} (ii), to see that
since $\|\nu\|\leq\|\rho_1m_{K_1}\|\|\rho_2 m_{K_2}\|=1$, that
$\rho\in\what{(K_1K_2)}_1$.  However,  let us give a direct verification, using only the present tools.
We may interchange the roles
of $K_1$ and $K_2$ above, and define $\til{\rho}:K_2K_1\to\Tee$  by
$\til{\rho}(k_2k_1)=\rho_2(k_2)\rho_1(k_1)$, which, 
like $\rho$, is well-defined and continuous.
We also see, by the computation (\ref{eq:computation2}), 
that $\nu=\til{\rho}m_{K_2K_1}=\til{\rho}m_{K_1K_2}$.  Hence
$\til{\rho}=\rho$ on $K_1K_2$.  But it then follows that $\rho$ is a homomorphism:
if $k=k_1k_2$, $l=l_1l_2$, $k_1,l_1\in K_1$, $k_2,l_2\in K_2$, we have
$k_2l_1=l_1'k_2'$ for some $l_1'$ in $K_1$ and $k_2'$ in $K_2$ and hence
\begin{align*}
\rho&(k_1k_2l_1l_2)=\rho_1(k_1l_1')\rho_2(k_2'l_2)=\rho_1(k_1)\rho(l_1'k_2')\rho_2(l_2) \\
&=\rho_1(k_1)\til{\rho}(k_2l_1)\rho_2(l_2)=\rho_1(k_1)\rho_2(k_2)\rho_1(l_1)\rho_2(l_2)
=\rho(k_1k_2)\rho(l_1l_2).
\end{align*}

Conversely, if the conditions of (ii) are assumed, then computations (\ref{eq:computation1}) and
(\ref{eq:computation2}) show that $(\rho_1 m_{K_1})\ast(\rho_2 m_{K_2})=\rho m_{K_1K_2}$
and show the same with the roles of $\rho_1 m_{K_1}$ and $\rho_2 m_{K_2}$, reversed.
\end{proof}

\begin{example}\label{ex:rider}
{\bf (i)} Let $G=K\rtimes A$ where $A$ is a compact group acting as continuous 
automorphisms on the group $K$, so we obtain
group law $(k,\alp)(k',\beta)=(k\alp(k'),\alp\beta)$.  
We identify $K$ and $A$ with their
cannonical copies in $G$ and suppose there is
$\rho$ in $\what{K}_1$ for which $\rho\circ\alp\not=\rho$ for some $\alp$ in $ A$,
and hence for $\alp$ on an open subset of $A$. (A specific example
would be to take $K=\Tee$, $A=\{\id,\sigma\}$ where $\sigma(t)=t^{-1}$,
and $\rho(t)=t^n$ where $n\in\Zee\setminus\{0\}$.)
Then for $u\in\fC(G)$ we obtain for $\rho$ as above
\[
\int_G u\, d[(\rho m_K)\ast m_A]=\int_K \int_A u(k,\alp)\rho(k)\, d\alp\,dk
\]
while, since the modular function on the compact group $A$ qua automorphisms on $K$ is $1$, 
we have 
\begin{align*}
\int_G u\, d[m_A\ast(\rho m_K)]
&=\int_A\int_K u(\alp(k),\alp)\rho(k)\,dk\,d\alp \\
&=\int_A\int_K u(k,\alp)\rho\circ\alp^{-1}(k)\,dk\,d\alp.
\end{align*}
Thus $\rho m_K$ and $m_A$ do not commute.
The only assumption missing from Theorem
\ref{theo:contcommute} is that $(k,\alp)\mapsto \rho(k)$ is
not a character on $G$.

{\bf (ii)}  Let $n\geq 5$ and $S_n$ the symetric group on a set of $n$ elements,
let $S_{n-1}$ denote the stabiliser subgroup of any fixed element, and $C$ the cyclic
subgroup generated by any full $n$-cycle.  Then $S_n=S_{n-1}C$, as may be easily checked,
and $\{S_{n-1},C\}$ is a ``non-trivial" matched pair
in the sense that neither subgroup is normal in $G$.

We note that the only non-trivial co-abelian normal subgroup of $S_n$ is $A_n=\ker\sgn$,
as $A_n$ is simple and of index $2$;
hence $\what{(S_n)}_1=\{1,\sgn\}$.  Hence if $\rho_2$ in $\what{C}\setminus\{1\}$ satisfies
$\rho_2\not=\sgn|_C$, then for any $\rho_1$ in $\what{(S_{n-1})}_1$, it follows from
Theorem \ref{theo:contcommute} that
$(\rho_1 m_{S_{n-1}})\ast(\rho_2 m_C)\not=(\rho_2 m_C)\ast(\rho_1 m_{S_{n-1}})$.
\end{example}

\section{Dynamical considerations}

If $S$ is a subset of $G$,  let $\langle S\rangle$ denote the smallest closed
subgroup containing $S$.

\begin{theorem}\label{theo:stromberg}
{\rm (Stromberg \cite{stromberg})}
If $\mu$ is a probability in $\meas(G)$, for which
$K=\langle\supp\mu\rangle\in\fK(G)$,  then 
the weak* limit, $\lim_{n\to\infty}\mu^{\ast n}$, exists if and only if
$\supp\mu$ is contained in no coset of a closed proper normal subgroup of $K$.
Moroever, this limit equals the Haar measure $m_K$.
\end{theorem}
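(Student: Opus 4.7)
First I would reduce to the compact case. Since $\supp \mu^{\ast n}\subseteq K$ for every $n$ and the restriction map $\fC_0(G)\to\fC(K)$ is surjective by Tietze extension, weak* convergence in $\meas(G)$ of measures supported in $K$ agrees with weak* convergence in $\meas(K)$, so I work on the compact group $K$. Via Peter--Weyl, weak* convergence of $\mu^{\ast n}$ in $\meas(K)$ is equivalent to convergence of the Fourier operators $\pi(\mu)^n = \pi(\mu^{\ast n})$ in $\mathrm{End}(V_\pi)$ for every irreducible unitary representation $\pi$ of $K$, where $\pi(\mu) := \int_K \pi(x)\,d\mu(x)$.

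For necessity I argue the contrapositive. Suppose $\supp\mu \subseteq x_0 N$ for some closed proper normal subgroup $N$ of $K$; then $x_0 \notin N$ (else $\langle\supp\mu\rangle \subseteq N$, forbidden), and $\langle\supp\mu\rangle = K$ shows $K/N$ is monothetic---topologically generated by $x_0 N$---hence abelian. Pontryagin duality supplies a character $\chi$ of $K/N$ with $\chi(x_0 N) \neq 1$; lifted to $K$, this $\chi$ takes the constant value $\chi(x_0) \in \Tee\setminus\{1\}$ on $\supp\mu$, so $\chi(\mu^{\ast n}) = \chi(x_0)^n$ does not converge and therefore neither does $\mu^{\ast n}$ weak*.

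For sufficiency I must show $\pi(\mu)^n \to 0 = \pi(m_K)$ for every non-trivial irreducible $\pi$; since $\pi(\mu)$ is a contraction on a finite-dimensional Hilbert space, this reduces to showing $\pi(\mu)$ has no eigenvalue of modulus $1$. Suppose to the contrary $\pi(\mu)v = \lam v$ with $v\neq 0$ and $|\lam| = 1$. Applied to the identity $\lam\|v\|^2 = \int_K \langle \pi(x)v, v\rangle\,d\mu(x)$, the Cauchy--Schwarz bound $|\langle \pi(x)v,v\rangle|\leq\|v\|^2$ and the fact that $\mu$ is a probability force $\pi(x)v = \lam v$ for every $x \in \supp\mu$. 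Thus the closed subgroup $S = \{x\in K : \pi(x)v \in \Cee v\}$ contains $\supp\mu$, hence equals $K$, so $\Cee v$ is a $\pi(K)$-invariant line; irreducibility gives $\dim\pi = 1$, so $\pi$ is a character with value $\lam$ on $\supp\mu$. Now $\lam = 1$ would force $\pi$ trivial on $\langle\supp\mu\rangle = K$, while $\lam \neq 1$ places $\supp\mu$ in the coset $x_0 \ker\pi$ of the closed proper normal subgroup $\ker\pi$---both contradicting our standing assumptions. Combined with Peter--Weyl, this gives $\mu^{\ast n} \to m_K$ weak*.

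The principal obstacle is the extremality step upgrading the averaged eigenequation $\pi(\mu)v = \lam v$ to the pointwise identity $\pi(x)v = \lam v$ on $\supp\mu$; once that is in hand, the generation hypothesis $\langle\supp\mu\rangle = K$ collapses any unit-modulus eigenvector into a $\pi(K)$-invariant line and forces $\pi$ to be a character, at which point Pontryagin duality on the monothetic quotient $K/N$ closes the argument. The remaining ingredients---the Tietze reduction, Peter--Weyl, and the spectral-radius calculation for finite-dimensional contractions---are routine.
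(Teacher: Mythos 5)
The paper offers no proof of this statement---it is quoted as a known theorem of Stromberg \cite{stromberg}---so there is no internal argument to compare yours against. Your proof is correct and is the standard Fourier-analytic argument: reduce to the compact group $K$, test against irreducible representations via Peter--Weyl (using the uniform bound $\|\mu^{\ast n}\|=1$, which you leave implicit but is routine), and show that for nontrivial irreducible $\pi$ the contraction $\pi(\mu)$ has spectral radius strictly less than one unless $\pi$ is a character constant on $\supp\mu$, which the hypotheses forbid. The two delicate points are both handled: the equality case of Cauchy--Schwarz upgrades the averaged eigenequation to $\pi(x)v=\lam v$ for $\mu$-a.e.\ $x$, and continuity of $x\mapsto\pi(x)v$ then promotes this to all of $\supp\mu$; and in the necessity direction the monothetic quotient $K/N$ is correctly observed to be abelian so that Pontryagin duality applies. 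It is worth noting that this style of argument is exactly the one the paper itself deploys in the proof of Theorem \ref{theo:dynamical} (computing $\pi(\nu^{\ast n})=\pi(\nu)^n$ and analyzing unit-modulus eigenvectors), so your proof fits naturally with the paper's methods; Stromberg's original treatment is more measure-theoretic, but nothing in your representation-theoretic route is lost thereby.
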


We observew that $\supp\mu^*=(\supp\mu)^{-1}$, and hence in the assumptions above we have
$\lim_{n\to\infty}(\mu^*)^{\ast n}=m_K$ too.

Since $\supp(m_K\ast m_L)=KL$, as was checked in the proof of Theorem \ref{theo:contcommute},
it follows that for $K,L$ in $\fK(G)$ for which $\langle KL\rangle$
is compact, we have $\lim_{n\to\infty}(m_K\ast m_L)^{\ast n}=m_{\langle KL\rangle}
=\lim_{n\to\infty}(m_L\ast m_K)^{\ast n}$.  For example, in
$S=\mathrm{SU}(2)$, any two distinct (maximal) tori $T_1$ and $T_2$ generate
$S$, as the only subgroups of $S$ with non-trivial connected components are
tori, or $S$, itself.  Hence $m_S=\lim_{n\to\infty}(m_{T_1}\ast m_{T_2})^{\ast n}$.

Futhermore,  we can deduce from the observation above that $m_L$ and $m_K$ commute
if and only if $KL=\langle KL\rangle$, giving the special case of Theorem \ref{theo:contcommute}.

Motivated by the above considerations, we consider the following dynamical result.  

\begin{theorem}\label{theo:dynamical}
Let $K_j\in\fK(G)$ and $\rho_j\in\what{(K_j)}_1$ for $j=1,\dots,m$ for which
$L=\langle K_1\dots K_m\rangle\in\fK(G)$.  
Then the weak* limit
\[
\lim_{n\to\infty}[(\rho_1 m_{K_1})\ast\dots\ast(\rho_m m_{K_m})]^{\ast n}
\]
always exists.  It is $\rho m_L$, provided there is a $\rho$ in $\what{L}_1$ for which
$\rho|_{K_j}=\rho_j$ for each $j$, and $0$ otherwise.
\end{theorem}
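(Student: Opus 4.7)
My plan is to handle the two cases by different methods. Write $\mu = (\rho_1 m_{K_1})\ast\cdots\ast(\rho_m m_{K_m})$, which is contractive and supported in $L$. In Case 1, where a character $\rho \in \what{L}_1$ with $\rho|_{K_j} = \rho_j$ for each $j$ exists, I reduce matters to Theorem \ref{theo:stromberg}. Note that $\rho_j m_{K_j} = \rho\cdot m_{K_j}$ as measures, and a short computation using multiplicativity of $\rho$ on $L$ yields $(\rho\nu_1)\ast(\rho\nu_2) = \rho\cdot(\nu_1\ast\nu_2)$ whenever $\nu_1,\nu_2$ are supported in $L$; iterating, $\mu^{\ast n} = \rho\cdot\nu^{\ast n}$ where $\nu = m_{K_1}\ast\cdots\ast m_{K_m}$ is a probability with $\supp\nu = K_1\cdots K_m$. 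Since $\langle K_1\cdots K_m\rangle = L$, the support is not contained in a coset of any proper closed normal subgroup (if $K_1\cdots K_m \subseteq gH$, then $e \in K_1\cdots K_m$ forces $g \in H$, whence $L \subseteq H$). Theorem \ref{theo:stromberg} then gives $\nu^{\ast n}\to m_L$ weak*, so $\mu^{\ast n}\to\rho m_L$.

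For Case 2 I turn to Peter-Weyl analysis on the compact group $L$. For each irreducible unitary representation $\pi$ of $L$, a direct computation yields $\what{\mu}(\pi) = P_1^\pi P_2^\pi\cdots P_m^\pi$, where $P_j^\pi$ is the orthogonal projection of $V_\pi$ onto $V_\pi^{(j)} = \{v\in V_\pi : \pi(k)v = \wbar{\rho_j(k)}v\text{ for all }k\in K_j\}$. Since $V_\pi$ is finite-dimensional, Halperin's theorem on iterated products of orthogonal projections gives $\what{\mu}(\pi)^n\to Q_\pi$, the orthogonal projection onto $\bigcap_j V_\pi^{(j)}$. A nonzero vector $v$ in this intersection is an eigenvector of $\pi(k)$ for every $k\in K_j$ and every $j$; the set of $k\in L$ with $\pi(k)v\in\Cee v$ is then a closed subgroup of $L$ containing each $K_j$, hence equals $L = \langle K_1\cdots K_m\rangle$, producing a character $\chi\in\what{L}_1$ with $\chi|_{K_j} = \wbar{\rho_j}$, equivalently $\wbar{\chi}|_{K_j} = \rho_j$. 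In Case 2 no such character exists, so $Q_\pi = 0$ for every $\pi$.

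Finally, to upgrade this Fourier-side convergence to weak* convergence on $L$ (and hence on $G$), I appeal to the uniform bound $\norm{\mu^{\ast n}}\leq 1$ together with the Peter-Weyl density of matrix coefficients in $\fC(L)$: convergence of $\int\phi\,d\mu^{\ast n}$ against each matrix coefficient $\phi$ extends by density to all of $\fC(L)$, and then to $\fC_0(G)$, since the measures $\mu^{\ast n}$ live on $L$. The main obstacle, in my view, is the identification step in Case 2, namely showing $Q_\pi = 0$ for every irrep: the closed-subgroup trick converting a joint $\pi$-eigenvector into a character extension on all of $L$ is the key idea, and makes essential use of the hypothesis $L = \langle K_1\cdots K_m\rangle$.
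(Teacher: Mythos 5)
Your proof is correct and follows essentially the same route as the paper: Peter--Weyl reduction to finite-dimensional representations, identification of $\pi(\rho_j m_{K_j})$ as the orthogonal projection onto the $\wbar{\rho_j}$-eigenspace, convergence of $(P_1^\pi\cdots P_m^\pi)^n$ to the projection onto the common range, the character-extension argument using $L=\langle K_1\cdots K_m\rangle$, and Stromberg's theorem via the factorization $\mu^{\ast n}=\rho\,(m_{K_1}\ast\cdots\ast m_{K_m})^{\ast n}$. The only cosmetic differences are that you cite Halperin's theorem where the paper gives a direct Jordan-form argument, and that you separate the two cases at the outset rather than extracting the character from the common range within a single analysis.
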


\begin{proof}
We let $\nu=(\rho_1 m_{K_1})\ast\dots\ast(\rho_m m_{K_m})$.  Then
each $\nu^{\ast n}$, being a product of contractive elements, satisfies
$\|\nu^{\ast n}\|\leq 1$.  The Peter-Weyl theorem tells us that the algebra $\trig(L)$ consiting of
matrix coefficients of finite-dimensional unitary representations, is uniformly dense in 
in $\fC(L)$.  Hence, since $\supp\nu\subseteq L$ and $\|\nu\|\leq 1$, hence
$\|\nu^{\ast n}\|\leq 1$ for each $n$,  it suffices to determine,  for
any finite dimensional unitary representation unitary $\pi:L\to \mathrm{U}(d)$, the nature of
the limit
\begin{equation}\label{eq:pinu}
\lim_{n\to\infty}\pi(\nu^{\ast n})=\lim_{n\to\infty}\int_L \pi(l)\,d\nu^{\ast n}(l)\text{ in }M_d(\Cee).
\end{equation}
It is well-known, and simple to compute that each 
\[
\pi(\nu^{\ast n})=\pi(\nu)^n=[\pi(\rho_1 m_{K_1})\dots \pi(\rho_1 m_{K_1})]^n.
\]
For each $j=1,\dots,m$ the Schur orthogonality relations tell us that
\[
\pi(\rho_j m_{K_j})=\int_{K_j}\rho_j(k)\pi(k)\,dk=p_j
\]
where $p_j$ is the orthogonal projection onto the space of vectors $\xi$ for which
$\pi(k)\xi=\wbar{\rho_j(k)}\xi$ for each $k$ in $K_j$.  Hence it follows that
\[
\pi(\nu)=p_1\dots p_m\text{ and }\pi(\nu^{\ast n})=(p_1\dots p_n)^n.
\]
Since each $p_j$ is contractive, the eigenvalues of $\pi(\nu)$ are of modulus
not exceeding one.  Furthermore, if $\|\pi(\nu)\xi\|_2=\|\xi\|_2$ (Hilbertian norm), then
we find that
\[
\|\xi\|_2=\|p_1\dots p_m\xi\|_2\leq\|p_2\dots p_m\xi\|_2\leq\dots\leq\|p_m\xi\|_2\leq\|\xi\|_2
\]
so equality holds at each place.  But  we see then that $\xi$ is in the range of
$p_m$, hence of $p_{j-1}$ if it is in the range of $p_j$, and thus in the mutual range
$R_\pi$ of each of $p_1,\dots, p_m$.  If we consider the Jordan form of $\pi(\nu)
=p_1\dots p_m$, we see that $\lim_{n\to\infty}\pi(\nu)^n=q$, where $q$ is the
necessarily contractive, hence orthogonal, range projection onto $R_\pi$.  But then for
$\xi$ in $R_\pi$ and $k_j$ in $K_j$, $j=1,\dots,m$, we have
\[
\pi(k_1\dots k_n)\xi=\pi(k_1)\dots\pi(k_m)\xi
=\wbar{\rho_1(k_1)}\dots\wbar{\rho_n(k_n)}\xi.
\]
If we have $\xi\not=0$, then $\Cee\xi$ is $\pi(K_1\dots K_m)$-invariant, hence 
$\pi$-invariant as $L=\langle K_1\dots K_m\rangle$.  Moreover, there is, then,
$\rho$ in $\what{L}_1$ for which $\pi(l)\xi=\rho(l)\xi$, and it follows that
$\rho|_{K_j}=\rho_j$.  Notice that this $\rho$ is determined independently
of the choice of $\xi$, and hence even the choice of $\pi$.  In particular, if no
such $\rho$ exists, i.e.\ for every finite dimensional unitary representation $R_\pi=\{0\}$,
then we have $\lim_{n\to\infty}\nu^{\ast n}=0$, in the weak* sense.  When this $\rho$ does
exists, we see for
$u$ in $\fC_0(G)$ that each $\int_G u\,d(\nu^{\ast n})$ is given by
\begin{align}
\int_{K_1}&\dots\int_{K_m}\dots\dots\int_{K_1}\dots\int_{K_m}
u(k_{11}\dots k_{1m}\dots k_{n1}\dots k_{nm}) \notag \\
& \phantom{=n}\rho_1(k_{11})\dots\rho_m(k_{1m})
\dots \rho_1(k_{n1})\dots\rho_m(k_{nm})\,
dk_{nm}\dots dk_{n1}\dots dk_{1m}\dots dk_{11} \notag \\
&=\int_{K_1}\dots\int_{K_m}\dots\dots\int_{K_1}\dots\int_{K_m}
u(k_{11}\dots k_{nm})\rho(k_{11}\dots k_{nm})\,dk_{nm}\dots dk_{11}  \notag \\
&=\int_G u\rho\, d([m_{K_1}\ast\dots\ast m_{K_m}]^{\ast n}).\label{eq:mintgral}
\end{align}
It is easy to verify, as in the proof of Theorem \ref{theo:contcommute},
that $\sup(m_{K_1}\dots m_{K_m})=K_1\dots K_m$.  Hence
by Theorem \ref{theo:stromberg} we have obtain weak* limit
\[
\lim_{n\to\infty}\nu^{\ast n}=\rho m_L
\]
as desired.
\end{proof}

In fact, the above result  generalizes the necessity direction
of Theorem \ref{theo:contcommute}.

\begin{corollary}\label{cor:dynamical}
Let $K_j$ and $\rho_j$, $j=1,\dots,m$, be as in Theorem \ref{theo:dynamical},
above, and $L=K_1\dots K_m$.  If $\nu=(\rho_1 m_{K_1})\ast\dots\ast(\rho_m m_{K_m})$
is idempotent then either $\nu=0$, or $L=\langle L\rangle\in\fK(G)$ and there is 
$\rho$ in $\what{L}_1$ with $\rho|_{K_j}=\rho_j$ for each $j$.
\end{corollary}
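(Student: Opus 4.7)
The plan is to reduce the corollary directly to Theorem \ref{theo:dynamical}. The key observation is that if $\nu\ast\nu=\nu$, then $\nu^{\ast n}=\nu$ for every $n\geq 1$, so the weak* limit $\lim_{n\to\infty}\nu^{\ast n}$ trivially exists and equals $\nu$ itself. I can therefore read off the value of $\nu$ directly from the dichotomy supplied by Theorem \ref{theo:dynamical}: either $\nu=0$, or $\nu=\rho m_{\langle L\rangle}$ for some $\rho$ in $\what{\langle L\rangle}_1$ with $\rho|_{K_j}=\rho_j$ for each $j$.

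Assuming the non-zero case, the only remaining assertion is that $L=\langle L\rangle$, since the existence of the character $\rho$ has already been granted. I would establish this by pinching $L$ between two support inclusions. One direction is that a convolution of finite measures has support in the product of the individual supports, so $\supp\nu\subseteq K_1\dots K_m=L$; this extends by a routine induction the observation already used in the proof of Theorem \ref{theo:contcommute}. The other direction is immediate from the formula $\nu=\rho m_{\langle L\rangle}$: since $|\rho|=1$ on $\langle L\rangle$, one has $\supp\nu=\langle L\rangle$. Together with the tautological $L\subseteq\langle L\rangle$, these force $L=\langle L\rangle\in\fK(G)$.

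There is no substantive obstacle here; the corollary is really a repackaging of Theorem \ref{theo:dynamical} in the language of idempotents, once one notices that idempotency collapses the sequence $\nu^{\ast n}$ to the constant sequence. The only point requiring any thought is the final support comparison, which sharpens the conclusion \emph{``$\nu$ lives on the compact group $\langle L\rangle$''} to \emph{``$L$ itself is already that compact group''}, and this is precisely the extra content of the corollary beyond the theorem it invokes.
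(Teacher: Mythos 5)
Your proof is correct and follows essentially the same route as the paper's: observe that idempotency makes $\nu^{\ast n}=\nu$ constant, invoke Theorem \ref{theo:dynamical} to identify the limit as $0$ or $\rho m_{\langle L\rangle}$, and then compare supports to conclude $L=\langle L\rangle$. The only (harmless, indeed slightly more economical) difference is that you use just the easy inclusion $\supp\nu\subseteq L$ together with $L\subseteq\langle L\rangle=\supp(\rho m_{\langle L\rangle})$, whereas the paper asserts the full equality $\supp\nu=L$ by the method of Theorem \ref{theo:contcommute}.
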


\begin{proof}
Suppose $\nu\not=0$.
By a similar method as in the proof of  Theorem \ref{theo:contcommute}, we see that 
$\supp\nu=L$.  Moreover, if $\nu$ is idempotent, then $\lim_{n\to\infty}\nu^{\ast n}=\nu$.
Hence we obtain that $L=\langle L\rangle$, and there exists a
multiplicative character $\rho$ on $L$, as promised, thanks to Theorem \ref{theo:dynamical}.
\end{proof}

Though Corollary \ref{cor:dynamical}  generalizes the necessity direction of
Theorem \ref{theo:contcommute}, the proof of the earlier result is more self-contained, not relying
on Stromberg's result.  Furthermore, the sufficiency direction of Theorem \ref{theo:contcommute}
cannot be generalized so easily, even with probability idempotent measures.

\begin{example}\label{ex:sothree}
The special orthogonal group $S=\mathrm{SO}(3)$ admits the well-known Euler angle
decomposition: $S=T_1T_2T_1$ where
\begin{align*}
T_1&=\left\{k_1(t)=\begin{bmatrix} 1 & 0 & 0 \\ 0 & \cos t & -\sin t \\ 0 & \sin t & \cos t\end{bmatrix}:
0\leq t\leq 2\pi\right\}\text{ and } \\
T_2&=\left\{k_2(t)=\begin{bmatrix} \cos t & -\sin t & 0 \\ \sin t & \cos t & 0 \\ 0 & 0 & 1 \end{bmatrix}:
0\leq t\leq 2\pi\right\}.
\end{align*}
We note that multiplication $T_1\times (T_2/\{I,k_2(\pi)\})\times T_1\to S$ is a diffeomorphism.
For $u$ in $\fC(S)$ we have
\[
\int_{T_1}\int_{T_2}\int_{T_1}u\,d(m_{T_1}\ast m_{T_2}\ast m_{T_1})
=\int_0^{2\pi}\int_0^{2\pi}\int_0^{2\pi} u(k_1(t_1)k_2(t_2)k_1(t_3))\,\frac{dt_3\,dt_2\,dt_1}{8\pi^3}
\]
whereas the Haar measure $m_S$ gives integral
\[
\int_Su\,dm_S=
\int_0^{2\pi}\int_0^{\pi}\int_0^{2\pi} u(k_1(t_1)k_2(t_2)k_1(t_3))\sin t_2\,\frac{dt_3\,dt_2\,dt_1}{8\pi^2}.
\]
Hence considering $T_1$-spherical functions, i.e.\ $u$ in $\fC(T_1\backslash S/T_1)$,
we see that 
\[
m_{T_1T_2T_1}=m_S\not=m_{T_1}\ast m_{T_2}\ast m_{T_1}.
\]
\end{example}

\begin{remark}\label{rem:mukherjea}
We note the following result, shown (implicitly) by Muhkerjea \cite[Theo.\ 2]{mukherjea}.
{\it If $\mu$ is a probability in $\meas(G)$, for which
$\langle\supp\mu\rangle\not\in\fK(G)$,  then 
the weak* limit satisfies $\lim_{n\to\infty}\mu^{\ast n}=0$.}  
\end{remark}

Hence if $K_1,\dots,K_m$ in $\fK(G)$
have $\langle K_1\dots K_m\rangle\not\in\fK(G)$, we see that 
\linebreak 
$\lim_{n\to\infty}(m_{K_1}\ast \dots\ast m_{K_m})^{\ast n}=0$, which is rather antithetical to 
having $m_{K_1}\ast \dots\ast m_{K_m}$ be an idempotent.

As a simple example, consider the any two non-trivial finite subgroups
$K$ and $L$ of discrete groups $\Gamma$ and $\Lambda$, and consider
each as a subgroup of the free product $\Gamma\ast \Lambda$.
For a Lie theoretic example, consider the Iwasawa decomposition $KAN$ of 
$S=\mathrm{SL}_2(\Ree)$.  Compute that if $a\in A\setminus\{I\}$, then $aKa^{-1}\not=K$.
Since $K$ is maximal compact, we see that $\langle KaKa^{-1}\rangle\not\in\fK(S)$.

It is the case that if for $K_1,\dots,K_m$ in $\fK(G)$ we have 
$H=\langle K_1\dots K_m\rangle\not\in\fK(G)$,
then for $\rho_j$ in $\what{(K_j)}_1$, $j=1,\dots,m$, the weak* limit satisfies
\begin{equation}\label{eq:rn}
\lim_{n\to\infty}[ (\rho_1 m_{K_1} )\ast\dots\ast (\rho_m m_{K_m}) ]^{\ast n}=0.
\end{equation}
In the case that $\rho_i|_{K_i\cap K_j}\not=\rho_j|_{K_i\cap K_j}$ for some $i\not=j$,
we have $(\rho_1 m_{K_1})\ast\dots\ast(\rho_m m_{K_m})=0$, as may be computed, 
by a straightforward adaptation of (\ref{eq:computation1}).  If there is a
continuous multiplicative character $\rho:H\to\Tee$ such that $\rho|_{K_j}=\rho_j$ for each
$j$, then the computation (\ref{eq:mintgral}), and Mukherjea's theorem give the result.
In presence or absense of these assumptions,  (\ref{eq:rn}) follows from a result which
should appear in work of Neufang, Salmi, Skalski and the author, in progress.  
In fact, the same result implies Theorem \ref{theo:dynamical}.  
However, the proof given in the present note uses simpler methods.

\section{On groups of measures}

Geenleaf's motivation for studying idempotent measures was their use in the
study of contractive homomorphisms
$\blone(H)\to\meas(G)$.  In doing so, he required a description of certain groups of measures, given
in Theorem \ref{theo:groupsinmeas}, below.  We are  interested in determining
how these groups interact under convolution product with each other.  Stokke \cite{stokke}
conducted a study of Greenleaf's groups, and also devised a more general class of groups;
see (\ref{eq:stokke}).  We show that the latter class is indeed more general.

Let for any subgroup $H$ of $G$
\[
N_G(H)=\{g\in G:gHg^{-1}= H\}\text{ and }
Z_G(H)=\{g\in G:gh=hg\text{ for all }h\text{ in }H\}
\]
denote its normalizer and centralizer, respectively.  Notice that for another subgroup $L$, we have
$L\subseteq Z_G(H)$ if and only if $H\subseteq Z_G(L)$.  Notice too that for the topological closure
$\wbar{H}$, we have $N_G(\wbar{H})=N_G(H)$ and $Z_G(\wbar{H})=Z_G(H)$, and hence
these subgroups are closed.

Given $K$ in $\fK(G)$ and $\rho$ in $\what{K}_1$ we let
\[
N_{K,\rho}=N_G(K)\cap N_G(\ker\rho)
\]
and then let $q:N_{K,\rho}\to N_{K,\rho}/\ker\rho$ be the quotient map.  We let
\[
G_{K,\rho}=q^{-1}(Z_{N_{K,\rho}/\ker\rho}(K/\ker\rho)).
\]
Hence $g$ in $G_{K,\rho}$ normalizes both $K$ and $\ker\rho$, and commutes with elements of $K$
modulo $\ker\rho$.  We then consider, in $\meas(G)$, the subgroup
\[
\Gamma_{\rho m_K}
=\{z\del_g\ast(\rho m_K):z\in\Tee\text{ and }g\in G_{K,\rho}\}
\]
We remark that  $G_{K,\rho}=\{g\in G:\del_g\ast(\rho m_K)=(\rho m_K)\ast\del_g\}$, and
$\Gamma_{\rho m_K}$ is topological group with the weak*-topology
on $\meas(G)$ and multiplication $z\del_g\ast(\rho m_K)\ast z'\del_{g'}\ast(\rho m_K)
=zz'\del_{gg'}\ast(\rho m_K)$.

\begin{theorem}\label{theo:groupsinmeas}
{\bf (i)} {\rm (Greenleaf \cite{greenleaf})}  Any closed group of contractive measures has identity of the
form $\rho m_K$ of Theorem \ref{theo:greenleaf} (ii), and is a subgroup of $\Gamma_{\rho m_K}$.

{\bf (ii)} {\rm (Stokke \cite{stokke}, after \cite{greenleaf})}  The map
\[
(z,g)\mapsto z\del_g\ast(\rho m_K):\Tee\times G_{K,\rho}\to\Gamma_{\rho m_K}
\]
is continuous and open and with compact kernel $\{(\rho(k),k):k\in K\}\cong K$.
\end{theorem}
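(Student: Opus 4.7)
The plan is to establish the two parts separately.

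For \textbf{(i)}, let $\mathcal{G}\subseteq\meas(G)$ be a closed group of contractive measures with identity $e$. Then $e\ast e=e$ and $\|e\|\leq 1$, so Theorem \ref{theo:greenleaf}\textbf{(ii)} produces $K\in\fK(G)$ and $\rho\in\what{K}_1$ with $e=\rho m_K$. For any $\mu\in\mathcal{G}$ we have $\mu=\mu\ast e=e\ast\mu$, together with $\|\mu\|=\|\mu^{-1}\|=1$ forced by $\|e\|=1$ and submultiplicativity. The substantive step is the rigidity assertion $\mu\in\Gamma_{\rho m_K}$. I would obtain this through a Wendel-type characterization: left convolution by $\mu$ restricts to an isometric invertible operator on the corner $e\ast\meas(G)\ast e$, which carries a natural $\rho$-twisted $\blone(K)$-module structure. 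An extension of Wendel's theorem on isometric multipliers (Greenleaf's main technical step) then forces $\mu=z\del_g\ast(\rho m_K)$ for some $z\in\Tee$ and $g\in G$. The constraint $g\in G_{K,\rho}$ is read off from the commutation $\del_g\ast(\rho m_K)=(\rho m_K)\ast\del_g$ (after absorbing the scalar $z$): a support comparison yields $gK=Kg$, placing $g\in N_G(K)$, and comparing the $\rho$-twists on the two sides after the substitution $k\mapsto g^{-1}kg$ yields both $g\in N_G(\ker\rho)$ and the centralizer condition on $K$ modulo $\ker\rho$ defining $G_{K,\rho}$.

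For \textbf{(ii)}, continuity is essentially formal: if $(z_\alpha,g_\alpha)\to(z,g)$, then $\del_{g_\alpha}\to\del_g$ weak* in $\meas(G)$ (since $u(g_\alpha)\to u(g)$ for every $u\in\fC_0(G)$), and then $z_\alpha\del_{g_\alpha}\ast(\rho m_K)(u)=z_\alpha\int u(g_\alpha k)\rho(k)\,dk\to z\int u(gk)\rho(k)\,dk$ by dominated convergence. To compute the kernel, suppose $z\del_g\ast(\rho m_K)=\rho m_K$. The support of the left-hand side is $gK$, forcing $g\in K$; then left-invariance of $m_K$ gives $\del_g\ast(\rho m_K)=\wbar{\rho(g)}\,\rho m_K$, so $z=\rho(g)$. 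Hence the kernel is exactly $\{(\rho(k),k):k\in K\}$, which is closed in $\Tee\times G_{K,\rho}$ and homeomorphic to the compact group $K$ via the projection to the second coordinate. For openness, factor the map through the quotient by this compact normal subgroup to obtain a continuous bijective homomorphism $(\Tee\times G_{K,\rho})/K\to\Gamma_{\rho m_K}$. The domain is a locally compact group (quotient of a locally compact group by a compact subgroup); standard open-mapping arguments, or a direct construction of a local cross-section produced by disintegrating a weak*-neighbourhood of $\rho m_K$ in $\Gamma_{\rho m_K}$ over $G/K$, then deliver the desired homeomorphism.

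The principal obstacle is the Wendel-type rigidity step in \textbf{(i)}: promoting the purely operator-algebraic data (contractivity and two-sided invertibility of $\mu$) to the very restrictive pointwise form $z\del_g\ast(\rho m_K)$. Greenleaf's argument crucially exploits both the group structure of $\mathcal{G}$, which provides an inverse with $\|\mu^{-1}\|\leq 1$, and a careful polar/support decomposition of $\mu$. The attendant bookkeeping, especially verifying that the resulting $g$ lies in the full centralizer-normalizer combination $G_{K,\rho}$ rather than merely in $N_G(K)$, is the most delicate point.
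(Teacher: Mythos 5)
You should first note that the paper does not prove this theorem at all: both parts are quoted from Greenleaf and Stokke, and the only internal material touching it is Remark \ref{rem:stokke}, which reworks the topological content of part (ii). So your proposal can only be judged on its own merits, and there it gets the routine parts right while leaving the hard parts as declarations of intent. In (i), identifying the identity as $\rho m_K$ via Theorem \ref{theo:greenleaf} (ii) and forcing $\|\mu\|=\|\mu^{-1}\|=1$ are correct and easy. But the entire content of Greenleaf's theorem is the rigidity step you yourself flag as ``the principal obstacle'': that a contractive $\mu$ with contractive inverse relative to $\rho m_K$ must equal $z\del_g\ast(\rho m_K)$. Naming ``an extension of Wendel's theorem'' does not discharge this; the corner $(\rho m_K)\ast\meas(G)\ast(\rho m_K)$ is not literally a group algebra (it contains the translates $\del_g\ast(\rho m_K)$ for $g$ far from $K$), and the Wendel-type statement for this twisted module is precisely the theorem being proved. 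The downstream bookkeeping you describe (support comparison giving $g\in N_G(K)$, then orthogonality of characters giving $\rho(g\cdot g^{-1})=\rho$ and hence $g\in N_G(\ker\rho)$ with the centralizer condition) is sound, but it all sits on an unproved foundation.

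In (ii), the continuity and kernel computations are correct (indeed $\del_g\ast(\rho m_K)=\wbar{\rho(g)}\,\rho m_K$ for $g\in K$, and the subgroup $\{(\rho(k),k):k\in K\}$ is normal in $\Tee\times G_{K,\rho}$ precisely because elements of $G_{K,\rho}$ commute with $K$ modulo $\ker\rho$, so your factoring through the quotient is legitimate). The gap is openness: the open mapping theorem for topological groups requires the domain to be $\sigma$-compact and the range to be locally compact in its own topology, and neither is available here --- $G_{K,\rho}$ need not be $\sigma$-compact, and local compactness of $\Gamma_{\rho m_K}$ in the weak* topology is part of the conclusion, not a hypothesis. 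The clean fix is exactly the device of Remark \ref{rem:stokke} (i): adjoin $0$ to $\Gamma_{\rho m_K}$ and $\infty$ to the quotient $\Omega_{K,\rho}$, observe that $\Gamma_{\rho m_K}\cup\{0\}$ is weak*-closed in the unit ball of $\meas(G)$ and hence compact, check that the extended map is a continuous bijection of compact Hausdorff spaces, and conclude it is a homeomorphism. You should replace the open-mapping appeal and the vague ``local cross-section'' alternative with that argument.
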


\begin{remark}\label{rem:stokke}
We give a mild simplification of Stokke's argument, which will help us, below.

{\bf (i)} {\it Let 
\[
\Omega_{K,\rho}
=(\Tee\times G_{K,\rho})/\{(\rho(k),k):k\in K\}.
\]
Then the one point compactification $\Omega_{K,\rho}\sqcup\{\infty\}$ (respectively,
topological coproduct, if $G_{K,\rho}$ is compact) is homeomorphic to 
$\Gamma_{\rho m_K}\cup\{0\}$.}

Indeed, consider the semigroup homomorphism on 
$(\Tee\times G_{K,\rho})\sqcup\{\infty\}$ given by
 $(z,g)\mapsto z\del_g\ast(\rho m_K)$, $\infty\mapsto 0$,
which has kernel $\{(\rho(k),k):k\in K\}$ at the identity
--- a fact which we shall take for granted, thanks to arguments in \cite{stokke,greenleaf}.
It suffices to verify that this semigroup homorphism is continuous
and that $\Gamma_{\rho m_K}\cup\{0\}$ is weak*-compact.  Let $(z_i,g_i)$ be a net in
$\Tee\times G_{K,\rho}$ such that $z_i\del_{g_i}\ast(\rho m_K)\to \mu$ in $i$.  If
$(g_i)$ is unbounded in $G_{K,\rho}$, we may pass to subnet and assume $g_i\to \infty$.
But then for $u$ in $\fC_0(G)$, $(u(g_i\cdot))$ converges to zero uniformly on compact sets,
thanks to uniform continuity of $u$.  It follows that $\mu=0$.  Otherwise $(g_i)$ is bounded
in $G_{K,\rho}$, and by passing to subnet, we may assume that $(z_i,g_i)\to(z,g)$
in $\Tee\times G_{K,\rho}$.  But then  for $u$ in $\fC_0(G)$, $(u(g_i\cdot))$ converges to
$u(g\cdot)$ uniformly on compact sets, and it follows that $\mu=z\del_g\ast(\rho m_K)$.
Notice that any limit point of a net in $\Gamma_{\rho m_K}$ is in $\Gamma_{\rho m_K}\cup\{0\}$,
so the latter set is weak*-closed, hence weak*-compact as it is a subset of the weak*-compact
unit ball of $\meas(G)$.

{\bf (ii)}  {\it If $H$ is any closed subgroup of $G_{K,\rho}$, then 
\[
\bigl((\Tee\times H)/\{(\rho(k),k):k\in K\cap H\}\bigr)\sqcup\{\infty\}
\]
is homoemorphic to $\{z\del_g\ast(\rho m_K):z\in\Tee\text{ and }g\in H\}\cup\{0\}$.
Moroever, the latter set is weak*-compact.}
These facts are immediate from (i), above.
\end{remark}

For a set $\Sigma$ of contractive measures, let $[\Sigma]$ denote the 
smallest weak*-closed semigroup containing $\Sigma$.

\begin{proposition}\label{prop:groups}
Suppose $K_1$, $K_2$, $\rho_1$ and $\rho_2$ satisfy the contditions of
Theorem \ref{theo:contcommute} (ii), and let $\rho$ be as given there.
Then
\[
\left[\Gamma_{\rho_1 m_{K_1}}\ast\Gamma_{\rho_2 m_{K_2}}\right]\cap\Gamma_{\rho m_{K_1K_2}}
=\{z\del_g\ast (\rho m_{K_1K_2}):z\in\Tee,g\in\langle 
H_1 H_2\rangle\}
\]
where $H_1=G_{K_1,\rho_1}\cap G_{K_1K_2,\rho}$ and 
$H_2=G_{K_2,\rho_2}\cap G_{K_1K_2,\rho}$.
\end{proposition}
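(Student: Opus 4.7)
The plan is to prove the two inclusions separately, working inside the Stokke parametrisation of Remark \ref{rem:stokke}(ii) and exploiting the commutation identities supplied by Theorem \ref{theo:contcommute}(ii).  Throughout set $I=\rho m_{K_1K_2}$.

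For $\supseteq$: I will first check that the ``elementary'' elements of the right-hand side already lie in $\Gamma_{\rho_1 m_{K_1}}\ast\Gamma_{\rho_2 m_{K_2}}$. If $g\in H_1$, Theorem \ref{theo:contcommute}(ii) yields
\[
\del_g\ast I=(\del_g\ast\rho_1 m_{K_1})\ast\rho_2 m_{K_2},
\]
a product of factors from $\Gamma_{\rho_1 m_{K_1}}$ and $\Gamma_{\rho_2 m_{K_2}}$ respectively.  The symmetric identity for $g\in H_2$ uses that $\del_g$ commutes both with $\rho_2 m_{K_2}$ and with $I$.  Scaling by $\Tee$ is free.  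Because $H_1\cup H_2\subseteq G_{K_1K_2,\rho}$, each such generator commutes with $I$, so any finite product inside $[\Gamma_{\rho_1 m_{K_1}}\ast\Gamma_{\rho_2 m_{K_2}}]$ of such generators collapses to a single measure $z\del_{g_1\cdots g_k}\ast I$ with $g_j\in H_1\cup H_2$.  Remark \ref{rem:stokke}(ii), applied to the closed subgroup $\langle H_1H_2\rangle=\langle H_1\cup H_2\rangle$ of $G_{K_1K_2,\rho}$, shows that these form a weak*-closed set equal to $\{z\del_g\ast I:z\in\Tee,\,g\in\langle H_1H_2\rangle\}$, giving the inclusion.

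For $\subseteq$: Take $\mu$ in $[\Gamma_{\rho_1 m_{K_1}}\ast\Gamma_{\rho_2 m_{K_2}}]\cap\Gamma_{\rho m_{K_1K_2}}$ and write $\mu=\lim_\alpha\nu_\alpha$ weak* with each $\nu_\alpha=\mu_1^\alpha\ast\dots\ast\mu_{2n_\alpha}^\alpha$ alternating between the two groups.  Since $\mu=I\ast\mu\ast I$ and convolution by $I$ is weak*-continuous, $\mu=\lim_\alpha I\ast\nu_\alpha\ast I$.  Using the in-group commutations $\del_{g_i^\alpha}\ast\rho_j m_{K_j}=\rho_j m_{K_j}\ast\del_{g_i^\alpha}$, the absorbing identities $I\ast\rho_j m_{K_j}=\rho_j m_{K_j}\ast I=I$ (a direct application of Theorem \ref{theo:contcommute}(ii) to $K_j\subseteq K_1K_2$), and the commutativity $\rho_1 m_{K_1}\ast\rho_2 m_{K_2}=\rho_2 m_{K_2}\ast\rho_1 m_{K_1}=I$, I will collapse this product to
\[
I\ast\nu_\alpha\ast I=z^\alpha\cdot I\ast\del_{h_1^\alpha}\ast I\ast\del_{h_2^\alpha}\ast I\ast\dots\ast I\ast\del_{h_{n_\alpha}^\alpha}\ast I
\]
with $h_j^\alpha=g_{2j-1}^\alpha g_{2j}^\alpha$.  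An orthogonality computation paralleling (\ref{eq:computation1}), applied with both compact subgroups equal to $K_1K_2$, then shows that $I\ast\del_h\ast I=\del_h\ast I$ when $h\in G_{K_1K_2,\rho}$ and is otherwise either zero or supported outside every left $K_1K_2$-coset.

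Because the limit $\mu$ lies in $\Gamma_{\rho m_{K_1K_2}}$, whose image under Stokke's parametrisation has kernel $K_1K_2$, each $h_j^\alpha$ may be adjusted by an element of $K_1K_2$ so as to take the form $h_j^\alpha=\til{g}_{2j-1}^\alpha\til{g}_{2j}^\alpha$ with $\til{g}_{2j-1}^\alpha\in H_1$ and $\til{g}_{2j}^\alpha\in H_2$; the telescoped product then lies in $\langle H_1H_2\rangle$, and another invocation of Remark \ref{rem:stokke}(ii) places $\mu$ in the right-hand side.  The main obstacle is exactly this subgroup-theoretic adjustment: verifying that a decomposition $g_1g_2\in G_{K_1,\rho_1}\cdot G_{K_2,\rho_2}$ which happens to sit in $G_{K_1K_2,\rho}$ can be rewritten, after multiplying both factors by a common element of $K_1K_2$, as a product in $H_1\cdot H_2$.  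This is what cuts the a priori candidate $\langle G_{K_1,\rho_1}\cup G_{K_2,\rho_2}\rangle\cap G_{K_1K_2,\rho}$ down to the correct $\langle H_1H_2\rangle$.
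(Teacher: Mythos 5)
Your ``$\supseteq$'' direction is sound and coincides with the paper's: for $g_1\in H_1$, $g_2\in H_2$ one has $\del_{g_1g_2}\ast(\rho m_{K_1K_2})=\del_{g_1}\ast(\rho_1 m_{K_1})\ast\del_{g_2}\ast(\rho_2 m_{K_2})$, and Remark \ref{rem:stokke} controls weak* limits of products. The problem is the forward inclusion, where you have flagged ``the main obstacle'' without resolving it --- and that obstacle is precisely the content of the proposition. Two things are missing. First, the ``adjustment'' claim is stated in terms of the hypothesis $g_1g_2\in G_{K_1K_2,\rho}$ alone, and from that hypothesis it does not follow that $g_1$ and $g_2$ individually lie in $G_{K_1K_2,\rho}$ (modulo $K_1K_2$); nothing in your argument rules out a genuinely ``mixed'' product whose factors each fail to normalize $K_1K_2$ while their product does. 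Second, the intermediate assertion that $I\ast\del_h\ast I$ is ``zero or supported outside every left $K_1K_2$-coset'' for $h\notin G_{K_1K_2,\rho}$ is not verified and is delicate: for $h\in N_G(K_1K_2)$ with $\rho\circ\mathrm{Ad}_h\neq\rho$ orthogonality does give $0$, but for $h\notin N_G(K_1K_2)$ the measure is supported on the double coset $K_1K_2hK_1K_2$ and need not vanish, so the collapsing of $I\ast\nu_\alpha\ast I$ into the claimed normal form does not go through as written.

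The paper closes the gap differently, and you should adopt its mechanism. It first records the elementary but decisive fact that if $z\del_g\ast(\rho' m_{K'})$ is a contractive idempotent $\rho''m_{K''}$, then comparing supports forces $K'=K''$ and $g\in K'$. Then, writing
\[
\del_{g_1}\ast(\rho_1 m_{K_1})\ast\del_{g_2}\ast(\rho_2 m_{K_2})
=\del_{g_1g_2}\ast\bigl[\del_{g_2^{-1}}\ast(\rho m_{K_1K_2})\ast\del_{g_2}\bigr],
\]
the bracketed factor is itself a contractive idempotent (supported on $g_2^{-1}K_1K_2g_2$). If the whole product equals $z\del_g\ast(\rho m_{K_1K_2})$ with $g\in G_{K_1K_2,\rho}$, the support comparison forces $g_2\in N_G(K_1K_2)$ with $\rho\circ\mathrm{Ad}_{g_2}=\rho$ and $g^{-1}g_1g_2\in K_1K_2$, from which $g_2\in H_2$ and then $g_1\in H_1$ follow directly --- no adjustment by $K_1K_2$ and no orthogonality computation on long products is needed. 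The passage to weak* limits is then handled by Remark \ref{rem:stokke} (ii) exactly as in your closing step. Until you supply an argument of this kind (or a proof of your adjustment claim from the actual hypothesis that the \emph{measure} lies in $\Gamma_{\rho m_{K_1K_2}}$, not merely that $g_1g_2\in G_{K_1K_2,\rho}$), the forward inclusion is not established.
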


\begin{proof}
Let us record some observations about contractive idempotents.  First we have that
$\supp(\rho m_K)=K$.  If $g$ in $G$ and $z$ in $\Tee$ are
is such that $z\del_g\ast(\rho m_K)=\rho' m_{K'}$, then $gK=\supp(z\del_g\ast(\rho m_K))
=\supp(\rho' m_{K'})=K'$, so $K=K'$ and $g\in K$.

To see the inclusion of the first set into the second, let $g_1\in G_{K_1,\rho_1}$, 
$g_2\in G_{K_2,\rho_2}$.  Then we compute
\[
\del_{g_1}\ast(\rho_1 m_{K_1})\ast \del_{g_2}\ast(\rho_2 m_{K_2})
=\del_{g_1}\ast(\rho m_{K_1K_2})\ast \del_{g_2}
=\del_{g_1g_2}\ast\del_{g_2^{-1}}\ast(\rho m_{K_1K_2})\ast\del_{g_2}
\]
where $\del_{g_2^{-1}}\ast(\rho m_{K_1K_2})\ast\del_{g_2}$ is a contractive idempotent.  If 
we assume that there is $g$ in $G_{K_1K_2,\rho}$ and $z$ in $\Tee$ for which
\[
\del_{g_1g_2}\ast\del_{g_2^{-1}}\ast(\rho m_{K_1K_2})\ast\del_{g_2}=z\del_g\ast(\rho m_{K_1K_2}).
\]
then it follows from the argument in the paragraph above that $g^{-1}g_1g_2\in K_1K_2$.
Hence $g^{-1}g_1\in K_1K_2\subseteq G_{K_1K_2,\rho}$ so $g_1\in H_1$. 
Also, as $g\in N_G(K_1K_2)$,  we have
$g_2\in K_1K_2 g\subseteq G_{K_1K_2,\rho}$, and we obtain that 
$g_2\in H_2$.   By Remark \ref{rem:stokke} (ii), any non-zero limit of products of elements of 
$\{z\del_g\ast (\rho m_{K_1K_2}):z\in\Tee,g\in\langle H_1 H_2\rangle\}$ remains in that set.

To see the reverse inclusion, we let $g_1\in H_1$ and $g_2\in H_2$ and we observe that 
\[
\del_{g_1g_2}\ast(\rho m_{K_1K_2})=\del_{g_1}\ast(\rho m_{K_1K_2})\ast\del_{g_2}
=\del_{g_1}\ast(\rho_1 m_{K_1})\ast \del_{g_2}\ast(\rho_2 m_{K_2}).
\]
We use Remark \ref{rem:stokke} (i), to see that non zero limits of products of such elements
remain in $\Gamma_{\rho m_{K_1K_2}}$.

Either argument above can be easily redone, multiplied by elements of $\Tee$.
\end{proof}

\begin{example} {\bf (i)} In the notation  above, suppose that $G_{K_1,\rho_1}=G$.
The happens, for example, if $K_1$ is in the centre of $G$.  Indeed, then $\ker\rho_1$
is in the centre of $G$, and $K_1/\ker\rho_1$ is in the centre of $G/\ker\rho_1$.
Then, in the assumption of Proposition \ref{prop:groups}, we have $G_{K_1,\rho_1}\cap
G_{K_1K_2,\rho}=G_{K_1K_2,\rho}$ and hence
\[
\left[\Gamma_{\rho_1 m_{K_1}}\ast\Gamma_{\rho_2 m_{K_2}}\right]\cap\Gamma_{\rho m_{K_1K_2}}
=\Gamma_{\rho m_{K_1K_2}}.
\]

{\bf (ii)}  In the notation above, we always have that $K_1\subseteq G_{K_1,\rho_1}$
and $K_2\subseteq G_{K_2,\rho_2}$.
Hence if $G=K_1K_2$, then by Proposition \ref{prop:groups}, we have
\[
\left[\Gamma_{\rho|_{K_1} m_{K_1}}\ast\Gamma_{\rho|_{K_2} m_{K_2}}\right]
\cap\Gamma_{\rho m_G}=\Gamma_{\rho m_G}
\]
for any $\rho\in\what{G}_1$.
This works even for ``non-trivial" matched pairs in the sense of Example \ref{ex:rider} (ii).

{\bf (iii)}  Let $T$ be any non-trivial compact abelian group, $\sig$  be given on $T\times T$ by 
$\sig(t_1,t_2)=(t_2,t_1)$ and $G=(T\times T)\rtimes \{\id,\sig\}$.  Let $\rho_1,\rho_2\in\what{T}$
(dual group of $T$) so $\rho_1\times\rho_2\in\what{T\times T}$.  
Then $N_G(T\times \{e\})=T\times T$ is abelian and hence
it is easy to follow the definition to see $G_{T\times\{e\},\rho_1}=T\times T$.
By symmetry, $G_{\{e\}\times T,\rho_2}=T\times T$, as well.  

On the other hand $N_G(T\times T)=G$, and 
$\sig(\ker\rho_1\times\rho_2)=\ker\rho_1\times\rho_2$, so $N_G(\ker\rho_1\times\rho_2)=G$.  Also
\[
G/\ker\rho_1\times\rho_2=[(T\times T)/\ker\rho_1\times\rho_2]\rtimes \{\id,\sig\}
\cong \rho_1\times\rho_2(T\times T)\times\{\id,\sig\}
\]
is abelian, i.e. $\sig$ acts trivially on the image $\rho_1\times\rho_2(T\times T)\cong(T\times T)/
\ker\rho_1\times\rho_2$.  Hence $G_{T\times T,\rho_1\times\rho_2}=G$.  Thus by
Proposition \ref{prop:groups} we have
\[
\left[\Gamma_{\rho_1 m_{T\times\{e\}}}\ast
\Gamma_{\rho_2 m_{\{e\}\times T}}\right]\cap\Gamma_{(\rho_1\times\rho_2) m_{T\times T}}
\subsetneq \Gamma_{(\rho_1\times\rho_2) m_{T\times T}}.
\]
\end{example}

We now consider some groups of measures considered in \cite{stokke}.
For $K$ in $\fK(G)$ and $\rho$ in $\what{K}_1$ let
\begin{equation}\label{eq:stokke}
\fM_{\rho m_K}=\{\nu\in\meas(G):\nu^*\ast\nu=\rho m_K=\nu\ast\nu^*\}.
\end{equation}
Notice that if $\nu\in\fM_{\rho m_K}$, then the operator
$\xi\mapsto\nu\ast\xi$ on $\mathrm{L}^2(G)$ is a partial isometry
with support and range projection $\xi\mapsto(\rho m_K)\ast\xi$.
Since the injection $\nu\mapsto(\xi\mapsto\nu\ast\xi)$ from $\meas(G)$
into bounded operators on $\mathrm{L}^2(G)$ is injective, it follows that
for $\nu$ in $\fM_{\rho m_K}$ that $\nu\ast(\rho m_K)=\nu=(\rho m_K)\ast\nu$.
We call $\fM_{\rho m_K}$ the {\it local unitary group} at $\rho m_K$.  
It is clear that $\Gamma_{\rho m_K}\subseteq\fM_{\rho m_K}$.

Our goal is to make a modest determination of the scope of $\fM_{m_K}$ for an
idempotent probability measure.  We begin with an analogue of a well-known
characterization of the stucture of the connected component of the invertible group of a Banach
algebra.  This lemma plays more of a role in motivating the methods below, than
in producing a result we shall use directly.

\begin{lemma}\label{lem:ccmdh}
Let $H$ be a locally compact group.  Then the connected component of the identity of
$\fM_{\del_e}$ in $\meas(H)$ is the group
\[
\fM_{\del_e,0}=\{\exp\lam_1\dots\exp\lam_n:\lam_1,\dots,\lam_n\in\meas(H)_{\mathrm{ska}},\,
n\in\En\}
\]
where $\meas(H)_{\mathrm{ska}}=\{\lam\in\meas(H):\lam^*=-\lam\}$, the real linear space
of skew-adjoint measures.
\end{lemma}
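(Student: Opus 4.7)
The plan is to view $\fM_{\del_e}$ as the unitary group of the unital Banach $*$-algebra $(\meas(H),\ast)$ and to prove Lemma \ref{lem:ccmdh} via the standard two-step scheme: containment of $\fM_{\del_e,0}$ in the identity component, and an open-subgroup argument for the reverse.

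For the first containment, I would check that whenever $\lam \in \meas(H)_{\mathrm{ska}}$ one has $(\exp\lam)^* = \exp(\lam^*) = \exp(-\lam) = (\exp\lam)^{-1}$, so $\exp\lam$ is unitary; products of unitaries are unitary, so $\fM_{\del_e,0}\subseteq\fM_{\del_e}$. Moreover, for each product $\exp\lam_1\ast\dots\ast\exp\lam_n$ the formula $t \mapsto \exp(t\lam_1)\ast\dots\ast\exp(t\lam_n)$ defines a norm-continuous path in $\fM_{\del_e}$ from $\del_e$ at $t=0$ to the given element at $t=1$. Hence $\fM_{\del_e,0}$ is path-connected and lies in the connected component of $\del_e$.

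For the reverse inclusion, the main technical step is to exhibit an open neighborhood of $\del_e$ in $\fM_{\del_e}$ lying inside $\fM_{\del_e,0}$. I would use the norm-convergent logarithm series $\log\nu = \sum_{n\geq 1}(-1)^{n+1}(\nu-\del_e)^{\ast n}/n$, defined whenever $\|\nu-\del_e\|<1$, together with the fact that on a sufficiently small ball about $0\in\meas(H)$ the map $\exp$ is injective with continuous inverse $\log$ (so $\log\exp\mu=\mu$ and $\exp\log\nu=\nu$ by commuting functional calculus). The crucial---and I expect principal---technical point is that when such a $\nu$ is additionally unitary, $\log\nu$ is automatically skew-adjoint: using isometry (hence norm-continuity) of the involution on $\meas(H)$, $(\log\nu)^* = \log(\nu^*) = \log(\nu^{-1})$, and since $\exp(-\log\nu)=\nu^{-1}$ the uniqueness of the local inverse of $\exp$ forces $\log(\nu^{-1})=-\log\nu$. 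Consequently every unitary $\nu$ sufficiently close to $\del_e$ has the form $\exp\lam$ with $\lam\in\meas(H)_{\mathrm{ska}}$, and therefore lies in $\fM_{\del_e,0}$.

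Once such an open neighborhood $V\subseteq\fM_{\del_e,0}$ of $\del_e$ is produced, I would finish by a standard topological-group argument: translating $V$ by elements of $\fM_{\del_e,0}$ (each a homeomorphism of $\fM_{\del_e}$) shows that $\fM_{\del_e,0}$ is open in $\fM_{\del_e}$, and an open subgroup of a topological group is necessarily closed. Being clopen and connected, $\fM_{\del_e,0}$ coincides with the connected component of $\del_e$ in $\fM_{\del_e}$, completing the proof.
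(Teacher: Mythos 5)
Your proposal is correct and takes essentially the same route as the paper: both use the local homeomorphism between a ball about $0$ and a neighbourhood of $\del_e$ via $\exp$/$\log$, deduce that the logarithm of a unitary near $\del_e$ is skew-adjoint from injectivity of $\exp$ on a symmetric, adjoint-closed ball, and conclude with the open-(hence clopen)-connected-subgroup argument.
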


\begin{proof}
There exists norm-open neighbourhoods $B$ of $0$ and $U$ of $\del_e$, in $\meas(H)$,
on which $\exp:B\to U$ is a homeomorphism.   There is a logarithm defined on a neighbourhood
of $\del_e$, and analytic functional calculus shows these are mutually inverse.
We may suppose that $B$ is symmetric and  closed under the adjoint. 

If $\nu\in U\cap\fM_{\del_e}$, then there is some $\lam$ in $B$ for which $\nu=\exp\lam$, and we 
have $\exp(\lam^*)=\exp(\lam)^*=\nu^*=\nu^{-1}=\exp(-\lam)$, and hence $\lam^*=-\lam$,
by assumption on $B$.  If $\nu=\exp\lam_1\dots\exp\lam_n$, with 
$\lam_1,\dots,\lam_n\in\meas(H)_{\mathrm{ska}}$ and
$\nu'$ in $\fM_{\del_e}$ is so close to $\nu$ that $\nu^*\ast\nu'\in U$, then 
$\nu^*\ast\nu'=\exp\lam_{n+1}$ for some $\lam_{n+1}$ in $\meas(H)_{\mathrm{ska}}$.
The subgroup of all such products is hence open in $\fM_{\del_e}$ and clearly connected, thus
the connected component of $\del_e$.
\end{proof}

We say that a locally compact group $H$ is {\it Hermitian} if each element 
self-adjoint element of $\blone(H)$ has real spectrum.  
See \cite{palmer} for notes on the class of Hermitian groups.

\begin{proposition}\label{eq:bigger}
Let $K\in\fK(G)$.

{\bf (i)} If  $N_G(K)\supsetneq K$, then $\Gamma_{m_K}\subsetneq\fM_{m_K}$.

{\bf (ii)} If $N_G(K)/K$ is contains either a non-discrete closed abelian subgroup,
or a closed non-Hermitian subgroup, 
then the connected compoent of the identity $\fM_{m_K,0}$, is unbounded.
\end{proposition}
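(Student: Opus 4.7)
Let $H = N_G(K)/K$. The plan is to build a $\ast$-algebra embedding $\iota: \meas(H) \to \meas(G)$ determined on Dirac masses by $\iota(\del_{hK}) = \del_h \ast m_K$ for $h \in N_G(K)$. Since $m_K$ is central in $\meas(N_G(K))$ and distinct cosets $hK$ have disjoint supports, $\iota$ is well-defined, multiplicative, $\ast$-preserving and isometric, with $\iota(\del_{\bar e}) = m_K$. Thus $\iota$ carries $\fM_{\del_{\bar e}}(\meas(H))$ into $\fM_{m_K}$, its identity component into $\fM_{m_K,0}$, and the Dirac-type unitaries $\{z\del_{\bar h}:z\in\Tee,\,\bar h\in H\}$ into $\Gamma_{m_K}$; both parts then reduce to statements about unitaries inside $\meas(H)$, or a suitable subalgebra thereof.

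For (i), since $H\neq\{\bar e\}$, pick $\bar h\in H\setminus\{\bar e\}$ and set $\lambda=\del_{\bar h}-\del_{\bar h^{-1}}$ (respectively $\lambda=i(\del_{\bar h}-\del_{\bar e})$ when $\bar h^2=\bar e$). Then $\lambda\in\meas(H)$ is a non-zero skew-adjoint measure, so by the argument of Lemma~\ref{lem:ccmdh}, $\exp\lambda$ lies in $\fM_{\del_{\bar e},0}(\meas(H))$. A direct expansion of the power series shows $\supp(\exp\lambda)\supseteq\{\bar e,\bar h\}$ contains at least two points, so $\exp\lambda$ is not of the form $z\del_{\bar g}$; consequently $\iota(\exp\lambda)\in\fM_{m_K}\setminus\Gamma_{m_K}$.

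For (ii), in each case I will exhibit inside $\meas(H_0)$, for an appropriate closed subgroup $H_0\leq H$, a continuous family $\{\exp(itT)\}_{t\in\Ree}$ of unitaries in $\fM_{\del_{\bar e},0}$ with $\sup_t\|\exp(itT)\|=\infty$; since $\iota$ is isometric the image is an unbounded subset of $\fM_{m_K,0}$. If $H_0$ is closed and non-Hermitian, non-Hermiticity supplies self-adjoint $T\in\blone(H_0)$ whose spectrum contains some $\sigma$ with $\operatorname{Im}\sigma\neq 0$; replacing $T$ by $-T$ if needed, $|\exp(i\sigma)|=e^{-\operatorname{Im}\sigma}>1$ yields $r(\exp(iT))>1$ and hence $\|\exp(inT)\|\geq r(\exp(iT))^n\to\infty$. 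If instead $A\leq H$ is closed, abelian and non-discrete, then $\what{A}$ is non-compact and the desired unbounded family comes from the Beurling--Helson--Varopoulos theorem: for any non-zero self-adjoint $T\in\blone(A)$, one has $\sup_{t\in\Ree}\|\exp(itT)\|_{\blone(A)}=\infty$.

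The main obstacle is the non-discrete abelian case of (ii): because abelian groups are Hermitian, $\exp(iT)$ has spectrum on $\Tee$ and the spectral-radius argument is unavailable. The unboundedness must instead be extracted from the strict discrepancy between the $\blone$-norm and the Gelfand norm on $\blone(A)$ for non-discrete $A$, which is classical but non-trivial harmonic analysis.
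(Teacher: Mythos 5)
Your reduction to $H=N_G(K)/K$ via an isometric $*$-embedding of $\meas(H)$ into $\meas(G)$ is exactly the paper's first step, and your handling of part (i) and of the non-Hermitian case of (ii) runs along the paper's lines. The genuine gap is the non-discrete abelian case of (ii). The fact you invoke --- that $\sup_{t\in\Ree}\norm{\exp(itT)}=\infty$ for \emph{every} non-zero self-adjoint $T\in\blone(A)$ --- is false. Take $A=\Tee$ and $T=2\cos\theta\,\tfrac{d\theta}{2\pi}$, so $\what{T}(n)=1$ for $n=\pm1$ and $0$ otherwise; then $T$ is a self-adjoint idempotent in $\blone(\Tee)$ and $\exp(itT)=\del_0+(e^{it}-1)T$, whose norm is at most $1+2\norm{T}_1$ uniformly in $t$. (Beurling--Helson rigidity concerns exponentials $e^{it\vphi}$ of functions $\vphi$ on a \emph{non-discrete} group; here the exponent lives on the discrete dual $\what{A}$, so that theorem gives nothing, and in any case you would still need to prove such a statement.) The paper's argument is different, and is the reason the hypothesis reads ``non-discrete abelian'': by the Wiener--Pitt phenomenon (Rudin, \S 6.4) a non-discrete locally compact abelian $A$ carries a self-adjoint \emph{measure} $\nu\in\meas(A)$ --- not an $\blone$-function --- whose spectrum in $\meas(A)$ is non-real even though $\what{\nu}$ is real-valued. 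With such a $\nu$ the non-Hermitian argument repeats verbatim: if $a+ib\in\sig(\nu)$ with $b\neq0$, then $e^{ita}e^{-tb}\in\sig(\exp(it\nu))$, so for a suitable sign of $t$ the spectral radius of $\exp(it\nu)$ exceeds $1$ and $\norm{\exp(itn\nu)}\to\infty$. You correctly diagnosed that the spectral-radius argument dies inside $\blone(A)$ (self-adjoint elements there do have real spectrum); the fix is to leave $\blone(A)$ for $\meas(A)$, not to seek a boundedness theorem for $\blone$-exponentials.

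A smaller issue in (i): the assertion that ``a direct expansion of the power series shows $\supp(\exp\lam)\supseteq\{\bar e,\bar h\}$'' is not immediate when $\bar h$ has finite order $d\geq3$, since the relevant coefficients fold into alternating Bessel-type sums whose non-vanishing requires an argument. The paper sidesteps this with a softer count: $\meas(H)_{\mathrm{ska}}$ has real dimension at least $2$, $\exp$ is a local homeomorphism near $0$, and $\Gam_{\del_{e_H}}=\Tee\del_H$ meets a sufficiently small neighbourhood of $\del_{e_H}$ only in the one-real-dimensional set $\Tee\del_{e_H}$. Alternatively, for your specific $\lam$ it is cleaner to use $\exp(t\lam)$ with $t$ small: then $\norm{\exp(t\lam)-\del_{\bar e}}<2$ rules out $z\del_{\bar g}$ with $\bar g\neq\bar e$, and local injectivity of $\exp$ forces $t\lam\in i\Ree\del_{\bar e}$, a contradiction. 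Note also that only the inclusion $\iota(\fM_{\del_{\bar e}})\subseteq\fM_{m_K}$ is needed for both parts, and your construction does deliver that.
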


\begin{proof}
We let $H=N_G(K)/K$.  We notice, in passing, that $N_G(K)=G_{K,1}$.
The map $\vphi:\meas(N_G(K)/K)\to\meas(G)$ given for $u$ in $\fC_0(G)$ by
\[
\int_G u\,d\vphi(\nu)=\int_{N_G(K)/K}\int_K u(gk)\,dk\,dg=\int_{N_G(K)} u(g)\,dg.
\]
Since arbitrary elements of $\fC_0(N_G(K)/K)$ may be represented as
$gK\mapsto\int_K u(gk)\,dk$, as above, we see that $\vphi$ is injective, even isometric.
In particular
\[
\fM_{m_K}=\vphi(\fM_{\del_{e_H}})\text{ and }\Gamma_{m_K}
=\vphi(\Gamma_{\del_{e_H}})=\Tee\vphi(\del_H)
\]
where $\del_H=\{\del_h:h\in H\}$.

(i)  To see that the inclusion $\Gamma_{m_K}\subseteq\fM_{m_K}$
is proper, it suffices to see that $\Gamma_{\del_{e_H}}$, 
is a proper subgroup of $\fM_{\del_{e_H}}$.  Since
$H$ contains at least two elements, the real dimension of $\meas(H)_{\mathrm{ska}}$
is at least $2$.  Since $\exp$ is analytic and a homeomorphism on a neighbourhood $\wtil{B}$ of 
$0$ in $\meas(H)_{\mathrm{ska}}$, $\fM_{\del_{e_H}}$ contains a manifold or real dimension at 
least $2$.  But since $\del_H$ is norm discrete, we can pick $\wtil{B}$ small
enough so that $\exp(\wtil{B})\cap\Gamma_{\del_{e_H}}\subset\Tee\del_{e_H}$.
Hence $\exp(\wtil{B})\not\subset\Gamma_{\del_{e_H}}$.

(ii) If there exists $\nu=\nu^*$ in $\meas(H)$ with non-real spectrum, then
the one-parameter subgroup $\{\exp(it\nu)\}_{t\in\Ree}$ is unbounded and a subgroup of
$\fM_{\del_{e_H}}$. The Wiener-Pitt phenomenon
shows that if $H$ contains a closed non-discrete abelian subgroup $A$, 
then such a $\nu$ exists.  Indeed, if $\nu=\nu^*$ in $\meas(A)\subseteq\meas(H)$, 
then the Fourier-Steiltjes
transform satisfies $\hat{\nu}=\what{\nu^*}=\wbar{\hat{\nu}}$, 
and we appeal to Section 6.4  in \cite{rudin}.
If $H$ contains a closed non-Hermitian subgroup, we can choose
$\nu$ to be absolutely continuous with respect to Haar measure.
\end{proof}

It is not clear whether or not $\fM_{m_K}$ is always locally compact with respect to the weak*
topology.

\begin{remark} {\bf (i)} The proof of (i) above tells us that if $N_G(K)/K$ is infinite, then 
$\fM_{m_K}$ contains manifolds of arbitrarily high dimension.  
Thus we see that $\fM_{m_K}$ is not Lie, in this case.

{\bf (ii)} If $N_G(K)$ is compact, and hence so too is $H=N_G(K)/K$
with dual object $\what{H}$, then
$\fM_{m_K}\cong\fM_{\del_{e_H}}$ is a subgroup of the product of unitary groups
$\prod_{\pi\in\what{H}}\mathrm{U}(d_\pi)$, containing the dense restricted product subgroup, 
consisting of all elements which are $I_{d_\pi}$ for all but finitely many indices $\pi$.
Indeed, $\nu\mapsto(\pi(\nu))_{\pi\in\what{H}}:\meas(H)\to\ell^\infty\text{-}\oplus_{\pi\in\what{H}}
 M_{d_\pi}(\Cee)$ (notation as in (\ref{eq:pinu}))
injects $\fM_{\del_{e_H}}$ into the product group.  Furthermore, 
consider $u$ in $\prod_{\pi\in\what{H}}\mathrm{U}(d_\pi)$ where $u_{\pi}=I_{d_\pi}$
for all but $\pi_1,\dots,\pi_n$ in $\what{H}$, and $u_{\pi_k}=[u_{ij,k}]$ in
$\mathrm{U}(d_{\pi_k})$ for $k=1,\dots,n$.  The element of $\meas(H)$ given by
\[
\nu_u=\del_e+\sum_{k=1}^nd_{\pi_k}
\left(\sum_{i,j=1}^{d_{\pi_k}}u_{ij,k}\pi_{k,ij}-\sum_{j=1}^{d_{\pi_k}}\pi_{k,jj}\right)m_H
\]
where each set $\{\pi_{k,ij}\}_{i,j=1}^{d_{\pi_k}}$ are matrix coefficients of 
$\pi_k$ with respect to an orthonormal basis
for the space on which it acts, satisfies, with respect to the same basis, $\pi(\nu_u)=u_\pi$.  
Notice that $\nu_u\ast\nu_{u'}=\nu_{uu'}$ and $\nu_u^*=\nu_{u^*}$.
\end{remark}

{\bf Acknowledgement.} The author was supported by NSERC Grant 312515-2010.
The author is grateful to M. Neufang for pointing him towards the concept of a matched pair,
and to M. Neufang, P. Salmi and A. Skalski for helping him discover a mistake in an earlier version
of Theorem \ref{theo:contcommute}.  The author is also grateful to R. Stokke, for
stimulating discussion of, and pointing out some errors in, the last section.

\bibliographystyle{amsplain}

\end{document}